\definecolor{calpolypomonagreen}{rgb}{0.12, 0.3, 0.17}
\def\FF{\mathbb{F}}
\def\NN{\mathbb{N}}
\def\QQ{\mathbb{Q}}
\def\ZZ{\mathbb{Z}}
\title{Proportion of p-adic Polynomials Which Are Irreducible}
\author{Isaac Rajagopal}
\date{October 18, 2024}
\begin{document}


\maketitle

\begin{abstract}
    We attempt to quantify the exact proportion of monic $p$-adic polynomials of degree $n$ which are irreducible. We find an exact answer to this when $n$ is prime and $p \neq n$, and also when $n = 4$ and $p \neq 2$. Our answers are rational functions in $p$. This relates to previous work done to find exact proportions of $p$-adic polynomials of degree $n$ which have $k$ roots.
\end{abstract}

\section{Introduction}

The $p$-adic integers $\ZZ_p$ are created by taking the set of all infinite series $a_0 + a_1p + a_2 p^2 + \cdots$, with addition and multiplication working like normal power series. These series converge if we use the $p$-adic valuation, where $|x| = p^{-v_p(x)}$, with $v_p(x)$ being the largest integer $k$ such that $p^k$ divides $x$. The $p$-adic numbers were invented by Kurt Hensel, who found a way to prove that many polynomial equations over the $p$-adic numbers have a solution, called Hensel's Lemma. Solving equations over the $p$-adic numbers can relate to determining whether a diophantine equation has a rational solution, using the local-global principle (\cite{gouvea}).

Proportions of $p$-adic numbers are well defined using the Haar measure; for example $1/p$ of all $p$-adic integers are congruent to 1 modulo $p$. This can be extended to polynomials with $p$-adic coefficients. We aim to quantify the exact proportion of monic $p$-adic polynomials in $\ZZ_p[x]$ of degree $n$ which are irreducible (\cite[\S 2.3]{koblitz}). Problems like this have been studied in many other papers, where an invariant of polynomials in $\ZZ_p[x]$ is created, and mathematicians try to find the proportion of these polynomials with a specific value of this invariant. Our invariant is the reducibility of a monic polynomial.

In 2004, an exact formula for the proportion of $p$-adic polynomials of degree $n$ which split into linear factors over $\QQ_p$ was found (\cite{buhler}). In 2022, this was generalized by finding a formula for the proportion of $p$-adic polynomials of degree $n$ which have a specific number of roots in $\QQ_p$, and showed that it is a rational function in $p$ (\cite{bhargava}). When $n \leq 3$, this is the same as our problem, but the two problems differ once $n \geq 4$ because a polynomial can factor over $\QQ_p$ without any linear terms. As recently as December 2023, authors have studied the proportion of polynomials $f(x)$ in $\ZZ_p[x]$ with specific splitting types. Their results imply that the proportion of not necessarily monic polynomials in $\ZZ_p[x]$ which are irreducible must be a rational function in $p$, which does not change by replacing $p$ with $1/p$ (\cite{asvin}).

All of these papers use much more advanced techniques than ours. With only elementary techniques, we solve for the proportion of degree $n$ irreducible polynomials in the case where $n$ is prime or $n = 4$. We work by counting irreducible polynomials mod $p$, $p^2$, $p^3$, ... using various properties of $p$-adic polynomials, such as Hensel's Lemma and Newton Polygons. At each step $p^k$ for $k>1$, these polynomials do not have unique factorization. Much of our work for quartics is based on counting the different factorizations of the same polynomials. In the limit as the exponent goes to infinity, we get $\ZZ_p[x]$, which itself has unique factorization using Gauss' Lemma. (\cite{gouvea})

In Section \ref{sectionpreliminary}, we discuss some general results which will be useful to us throughout this paper. This includes Theorem \ref{theoremapprox}, which approximates the proportion of irreducibles to be close to $1/n$ for any degree $n$. Then, we find exact values of this proportion for any prime value of the degree $n$ in Section \ref{sectionprimes}. However, this problem seems to become much more complicated for composite $n$. The bulk of the paper is Section \ref{sectionquartics}, where we find an exact value of this proportion for $n = 4$, as long as $p \neq 2$. 

\section{Preliminary Results}\label{sectionpreliminary}

Consider monic polynomials in $\ZZ_p[x]$ of degree $n$. These can be represented as elements of $\ZZ_p^n$, with a polynomial $x^n + c_{n-1}x^{n-1} + \cdots + c_1 x + c_0$ represented as $(c_{n-1}, ... , c_1, c_0)$. We use the Haar measure to quantify proportions of these polynomials: for example the proportion of polynomials with constant term divisible by $p$ is $\frac{1}{p}$. This set can be seen as $\ZZ_p^{n-1} \times B(0, \frac{1}{p})$, where $B(0, \frac{1}{p})$ is the open ball in $\ZZ_p$ of radius $\frac{1}{p}$ centered at 0, which has Haar measure $1 \times \frac{1}{p}$. The proportion of polynomials with all terms divisible by $p$ is $\frac{1}{p^n}$. With the terms of a polynomial being written in descending order, this set can be seen as $B(0, \frac{1}{p})^n$, which has Haar measure $ \frac{1}{p^n}$. (\cite{koblitz} \S 2.3)

We define these polynomials to be irreducible if they cannot be written as the product of two nonconstant polynomials in $\ZZ_p[x]$. By Gauss' Lemma, this is equivalent to not being able to write them as a product of two nonconstant polynomials in $\QQ_p[x]$. If a monic polynomial $f(x)$ (mod $p^i$) is irreducible, then all polynomials in $\ZZ_p[x]$ which are congruent to $f(x)$ (mod $p^i$) are irreducible. Let this polynomial, written in $\ZZ_p^{n}$ be $(c_{n-1}, ... , c_1, c_0)$. Then the open ball $\prod_{j=1}^{n} B(c_{n-j},p^{-i})$, which is all polynomials congruent to $f(x)$ (mod $p^i$), is an open set of measure $p^{-in}$ which consists solely of irreducible polynomials. We now introduce Hensel's Lemma, our main tool for showing polynomials to be reducible, and then use it to approximate the proportion of irreducible monic polynomials in $\ZZ_p[x]$.

\textbf{Hensel's Lemma}: There are two forms of this, from (\cite{gouvea}). A degree $n$ polynomial $f(x) \in \ZZ_p[x]$ is said to be Hensel if either (1) or (2) holds. A polynomial $g(x)$, in $\ZZ/(p^k \ZZ)[x]$ is said to be Hensel if all polynomials $f(x) \in \ZZ_p[x]$ congruent to $g(x) \pmod{p^k}$ are Hensel. \begin{enumerate}[label=(\arabic*)]
\item If there is a value $\alpha$ (in any extension of $\ZZ_p$ with degree less than $n$) for which $|f(\alpha)|<|f'(\alpha)|^2$, then $f(x)$ splits into distinct factors in $\ZZ_p[x]$. 
\item If a polynomial $f(x) \in \ZZ_p[x]$ splits into two factors which are relatively prime, when viewed (mod $p$), then all polynomials in $\ZZ_p[x]$ which are congruent to $f(x)$ (mod $p$) are reducible.  
\end{enumerate}

\begin{thm}\label{theoremapprox}
Let $(n,p) = 1$, and let the proportion of monic degree $n$ polynomials in $\ZZ_p[x]$ which are irreducible be $I$. Then $I = \frac{1}{n} + O(p^{-n/2})$, as $n$ varies.
\end{thm}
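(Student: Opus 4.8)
The plan is to read off irreducibility over $\ZZ_p$ from the factorization of the reduction $\bar f \in \FF_p[x]$, using form (2) of Hensel's Lemma. Two observations drive everything. First, if $\bar f$ is irreducible over $\FF_p$ then $f$ is irreducible over $\ZZ_p$: any monic factorization $f = gh$ into nonconstant factors would reduce to a nontrivial monic factorization $\bar f = \bar g\,\bar h$. Conversely, if $f$ is irreducible over $\ZZ_p$, then $\bar f$ must be a power $\bar g^{\,k}$ of a single monic irreducible $\bar g \in \FF_p[x]$: otherwise $\bar f$ has at least two distinct irreducible factors, so we can write $\bar f = \bar u\,\bar v$ with $\bar u,\bar v$ nonconstant and $\gcd(\bar u,\bar v)=1$ (group the powers of one prime factor of $\bar f$ into $\bar u$ and the rest into $\bar v$), and then form (2) of Hensel's Lemma shows every lift of $\bar f$, in particular $f$, is reducible.

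Consequently the set of irreducible monic $f$ decomposes, up to reduction mod $p$, into the residue classes whose reduction is irreducible (all of which consist entirely of irreducibles), together with a subset of the residue classes whose reduction is a proper prime power $\bar g^{\,k}$ with $k\ge 2$. Since each residue class mod $p$ is an open ball of Haar measure $p^{-n}$, I would write
\[
I \;=\; \frac{N_p(n)}{p^n} \;+\; E, \qquad 0 \le E \le \frac{1}{p^n}\,\#\{\bar g^{\,k} : \bar g\in\FF_p[x]\text{ monic irreducible of degree }n/k,\ k\ge 2\},
\]
where $N_p(n)$ denotes the number of monic irreducible polynomials of degree $n$ over $\FF_p$.

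For the main term, Gauss's classical formula gives $N_p(n) = \tfrac1n\sum_{d\mid n}\mu(d)\,p^{n/d}$, so $\bigl|N_p(n) - \tfrac{p^n}{n}\bigr| \le \sum_{d\mid n,\ d\ge 2} p^{n/d} \le \sum_{j\ge 2} p^{n/j}$, and this last sum is $O(p^{n/2})$ with an absolute implied constant (the $j=2$ term dominates and the remaining tail is bounded uniformly in $n$ and $p$). Hence $N_p(n)/p^n = \tfrac1n + O(p^{-n/2})$. For the error term $E$, a proper prime power $\bar g^{\,k}$ is determined by $\bar g$, whose degree $d = n/k$ satisfies $d\le n/2$; thus the count is at most $\sum_{d\mid n,\ d\le n/2} N_p(d) \le \sum_{d\le n/2} p^{d}$, again $O(p^{n/2})$ uniformly in $n$, so $E = O(p^{-n/2})$. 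Adding the two estimates yields $I = \tfrac1n + O(p^{-n/2})$.

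The only genuine step is the reduction-mod-$p$ dichotomy; once it is in place the theorem reduces to the classical count of irreducibles over $\FF_p$ plus a crude tail bound, and in fact the hypothesis $(n,p)=1$ is not needed for this particular estimate (it becomes relevant for the sharper results in later sections). The point that requires a little care is uniformity in $n$ — the phrase ``as $n$ varies'' — which amounts only to checking that $\sum_{d\mid n,\ d\le n/2} p^{d}$ and $\sum_{d\mid n,\ d\ge 2} p^{n/d}$ are each at most an absolute constant times $p^{n/2}$ for all $n\ge 2$ and all primes $p$. No other subtlety arises, since we are content to bound the ambiguous prime-power residue classes by their full measure rather than determining which lifts inside them are actually irreducible.
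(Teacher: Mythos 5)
Your argument is essentially identical to the paper's: both use the dichotomy that a residue class mod $p$ consists entirely of irreducibles if the reduction is irreducible, entirely of reducibles (via form (2) of Hensel's Lemma) if the reduction has two coprime nonconstant factors, and is ambiguous only when the reduction is a proper prime power; both then invoke Gauss's count $\tfrac1n\sum_{d\mid n}\mu(n/d)p^d$ for the main term and bound the ambiguous classes by $O(p^{n/2})$ of them. Your write-up is correct (and your side remark that $(n,p)=1$ is not actually needed here is accurate), so no further comparison is warranted.
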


\begin{proof}

This formula for the number of irreducible polynomials in $\FF_p[x]$ of degree $n$, which we call $\nu(n)$, is in Dummit and Foote's \emph{Abstract Algebra} (\cite[588]{dummit}): \footnote{In stating this formula, I have introduced the M\"obius function $\mu(m)$ for a natural number $m$. Define $\mu(m) = 0$ if $m$ can be divided by the square of some prime, $\mu(m) = 1$ if $n$  is the product of an even number of distinct primes, and $\mu(m) = -1$ if $n$ is the product of an odd number of distinct primes. }

\begin{equation}\label{eqsrthesis}
\nu(n) = \frac{1}{n}\sum_{d\mid n}\mu \left(\frac{n}{d}\right)p^{d}
\end{equation}

A corollary of this formula is that $\nu(n) = \frac{1}{n} p^n + O(p^{n/2})$. 
Each degree $n$ irreducible polynomial in $\FF_p[x]$ gives an open set of measure $p^{-n}$ which is irreducible in $\ZZ_p[x]$. This gives a total measure of $\frac{1}{n} + O(p^{-n/2})$ of irreducible polynomials. 

We consider the remainder of polynomials in $\ZZ_p[x]$, which are reducible when viewed modulo $p$ in $\FF_p[x]$. By using (2) in Hensel's Lemma, every one of these polynomials in $\ZZ_p[x]$ is Hensel, unless it is congruent to a power of one irreducible polynomial in $\FF_p[x]$. The number of polynomials mod $p$ which are 
a power of one irreducible polynomial in $\FF_p[x]$ is equal to the number of irreducible polynomials with degree properly dividing $n$, which is \[\sum_{d\mid n, d<n} \nu(d) = \sum_{d\mid n, d<n} \frac{1}{d}p^d + O(p^{d/2}) = O(p^{n/2}).\]

Recall that there are a total of $p^n$ monic polynomials in $\FF_p[x]$ of degree $n$. This gives $\frac{n-1}{n}p^{n} + O(p^{n/2})$ reducible polynomials with distinct irreducible factors. Each of these gives an open set of measure $p^{-n}$ which is full of Hensel polynomials, all reducible in $\ZZ_p[x]$. This gives a total measure of $\frac{n-1}{n} + O(p^{-n/2})$ of reducible polynomials. The polynomials which we are not sure of have a total measure of $O(p^{-n/2})$.
\end{proof}

We now introduce a lemma, which will have a corollary that will simplify our problem.

\begin{lemma}\label{lemmaMap}
    The homeomorphism $\phi: \ZZ_p[x] \to \ZZ_p[x]$ defined by $ f(x) \mapsto f(x+a)$ for any $a \in \ZZ_p$ preserves irreducibility and preserves the measure of open sets in $\ZZ_p^{n}$.
\end{lemma}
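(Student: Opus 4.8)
The plan is to exploit that $\phi$ is nothing but the restriction to $\ZZ_p[x]$ of the substitution automorphism $x \mapsto x+a$, which is a ring automorphism with explicit two-sided inverse $\psi \colon g(x) \mapsto g(x-a)$. First I would record that $\phi$ carries monic polynomials of degree $n$ to monic polynomials of degree $n$ (since $(x+a)^n$ is monic of degree $n$ and the lower-order terms of $f$ stay lower-order after substitution), so $\phi$ really is a well-defined self-map of the coordinate space $\ZZ_p^n$ introduced above, with $\psi$ as its inverse. Since $\phi$ is a ring homomorphism preserving degree, a factorization $f = gh$ into nonconstant polynomials corresponds exactly to the factorization $\phi(f) = \phi(g)\phi(h)$ into nonconstant polynomials (no factor can degenerate to a unit, as degrees are unchanged); applying the same to $\psi$ gives the converse, so $f$ is irreducible if and only if $\phi(f)$ is. This settles the preservation of irreducibility.

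For the topological and measure claims I would make the coordinate map explicit. Writing $f(x) = x^n + \sum_{k=0}^{n-1} c_k x^k$ and expanding $\phi(f)(x) = \sum_{k=0}^{n} c_k (x+a)^k$ (with $c_n = 1$) by the binomial theorem gives the new coefficients $c'_k = c_k + \sum_{i=k+1}^{n} \binom{i}{k} a^{\,i-k} c_i$. This exhibits $\mathbf{c} \mapsto \mathbf{c}'$ as an affine map $\mathbf{c}' = M\mathbf{c} + \mathbf{b}$ on $\ZZ_p^n$, where $\mathbf{b} \in \ZZ_p^n$ is the constant vector coming from the $c_n = 1$ term and, ordering the coordinates as $(c_{n-1}, \dots, c_0)$, the matrix $M$ is lower triangular with $1$'s on the diagonal and off-diagonal entries $\binom{i}{k} a^{\,i-k} \in \ZZ_p$. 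In particular $M \in GL_n(\ZZ_p)$ with $\det M = 1$, and both $\phi$ and $\psi$ are given by polynomial (hence continuous) coordinate functions, so $\phi$ is a homeomorphism.

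Finally I would deduce measure preservation by factoring $\phi$, in the coordinate model, as the $\ZZ_p$-linear automorphism $\mathbf{c} \mapsto M\mathbf{c}$ followed by the translation $\mathbf{c} \mapsto \mathbf{c} + \mathbf{b}$. Translations preserve Haar measure on the compact group $\ZZ_p^n$ by translation-invariance, and a linear map given by a matrix $M \in GL_n(\QQ_p)$ scales Haar measure by the factor $|\det M|_p$, which here is $|1|_p = 1$. Hence $\phi$ preserves the measure of every measurable subset of $\ZZ_p^n$, in particular of open sets. I expect the only genuine work to be the bookkeeping in the binomial expansion that exhibits the unipotent triangular structure of $M$; once that is in hand, the measure statement is just an application of the standard behavior of Haar measure under translations and $GL_n(\ZZ_p)$.
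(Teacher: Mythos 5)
Your proof is correct, but it takes a different route from the paper's. The paper argues combinatorially: it observes that $\phi$ (together with its inverse $f(x)\mapsto f(x-a)$) permutes the residue classes of monic degree-$n$ polynomials modulo $p^k$ for every $k$, and since each such class is a ball of measure $p^{-kn}$ and every open set is a union of such classes, the measure of open sets is preserved. You instead make the coordinate map explicit via the binomial expansion, exhibit it as an affine map $\mathbf{c}\mapsto M\mathbf{c}+\mathbf{b}$ with $M$ unipotent lower-triangular in $GL_n(\ZZ_p)$, and invoke translation-invariance of Haar measure together with the change-of-variables factor $|\det M|_p=1$. The two arguments are essentially two faces of the same fact --- a matrix in $GL_n(\ZZ_p)$ is exactly one that permutes the cosets of $p^k\ZZ_p^n$ --- but yours imports the standard $GL_n(\QQ_p)$ Haar-measure machinery and in exchange gives the stronger conclusion that $\phi$ preserves the measure of \emph{all} measurable sets, while the paper's stays entirely elementary and self-contained. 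Your treatment of irreducibility (degree-preserving ring automorphism, so factorizations into nonconstant factors correspond) also fills in what the paper dismisses as ``clear.'' Both approaches are sound; yours is the more systematic, the paper's the more economical.
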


\begin{proof} 
    This map clearly preserves irreducibility. This map preserves the size of open balls in $\ZZ_p^{n}$, because this map permutes the residue classes of monic degree $n$ polynomials modulo $p^k$ for each $k$. This is because if $f(x)$ is a polynomial in $\ZZ/(p^k) [x]$, let $f(x)$ be sent to another polynomial $\phi(f(x)) = f'(x)$ in $\ZZ/(p^k) [x]$. Then all polynomials $g(x)$ in $\ZZ_p[x]$ congruent to $f(x) \mod{p^k}$ are also sent by $\phi$ to polynomials $\phi(g(x)) = g'(x) \in \ZZ_p[x]$ congruent to $f'(x) \mod{p^k}$. This is a bijection, since this map has a clear inverse $\phi^{-1}: f(x) \mapsto f(x-a)$. All open balls in $\ZZ_p^{n}$ can be made up of residue classes of monic degree $n$ polynomials modulo $p^k$ for some $k$.
\end{proof}

\begin{corollary}\label{lemmatrace}
    Let $(n,p) = 1$. Let the proportion of monic $n^{th}$ degree polynomials in $\ZZ_p[x]$ which are irreducible be $I$, if it exists. Then $I$ is equal to the proportion of monic $n^{th}$ degree polynomials in $\ZZ_p[x]$ with $x^{n-1}$ term of 0 which are irreducible. 
\end{corollary}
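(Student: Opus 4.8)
The plan is to make rigorous the obvious idea of \emph{depressing} a monic polynomial: given $f(x) = x^{n} + c_{n-1}x^{n-1} + \cdots + c_0$, substitute $x \mapsto x - c_{n-1}/n$ to obtain a polynomial whose $x^{n-1}$-coefficient vanishes. Since $(n,p)=1$, the integer $n$ is a unit in $\ZZ_p$, so $c_{n-1}/n \in \ZZ_p$ and this substitution keeps us inside $\ZZ_p[x]$. Lemma \ref{lemmaMap} already tells us that each individual shift $x \mapsto x+a$ preserves irreducibility and the measure of open sets, but we cannot apply it directly here because the required shift $a = -c_{n-1}/n$ depends on $f$. So instead I would package all of these shifts into one global change of coordinates and prove that change of coordinates is measure-preserving.

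Concretely, write $S_0 \subseteq \ZZ_p^{n}$ for the set of monic degree-$n$ polynomials whose $x^{n-1}$-coefficient is $0$, identified with $\ZZ_p^{n-1}$ via the remaining coefficients and normalized to have total mass $1$, and define $\Psi \colon S_0 \times \ZZ_p \to \ZZ_p^{n}$ by $\Psi(g,s) = g(x+s)$. A one-line computation shows the $x^{n-1}$-coefficient of $g(x+s)$ is exactly $ns$, so $\Psi$ is a bijection with inverse $h \mapsto \bigl(h(x - c_{n-1}(h)/n),\, c_{n-1}(h)/n\bigr)$; both $\Psi$ and $\Psi^{-1}$ are continuous, and since both spaces are compact metric spaces this makes $\Psi$ a homeomorphism. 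Moreover, by Lemma \ref{lemmaMap} applied with $a = s$, the polynomial $g(x+s)$ is irreducible if and only if $g$ is, so $\Psi^{-1}(\mathcal{I}) = \mathcal{I}_0 \times \ZZ_p$, where $\mathcal{I}$ denotes the set of irreducible monic degree-$n$ polynomials and $\mathcal{I}_0 = \mathcal{I} \cap S_0$.

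The crux is to show $\Psi$ is measure-preserving, and here I would reuse the ``permuting residue classes mod $p^{k}$'' idea from the proof of Lemma \ref{lemmaMap}. Each coefficient of $g(x+s)$ is a polynomial with coefficients in $\ZZ_p$ (in fact in $\ZZ$) in the coefficients of $g$ and in $s$, and, after multiplying by the unit $n^{-1}$, the same holds for the coordinates of $\Psi^{-1}$. Hence for every $k$ the map $\Psi$ sends each residue class mod $p^{k}$ in $S_0 \times \ZZ_p$ into a residue class mod $p^{k}$ in $\ZZ_p^{n}$, and $\Psi^{-1}$ does likewise; a counting argument (both spaces have exactly $p^{nk}$ such classes, $\Psi$ is a bijection, so no class can be missed) then shows $\Psi$ induces a bijection between the two finite sets of level-$k$ residue classes. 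Since each such class has measure $p^{-nk}$ in its own space, $\Psi$ preserves the measure of every clopen ball, hence of every open set (a countable disjoint union of balls), hence of every measurable set.

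Finally I would conclude: assuming $I$ exists, $\mathcal{I}$ is measurable, so $\Psi^{-1}(\mathcal{I}) = \mathcal{I}_0 \times \ZZ_p$ is measurable; since this set is a cylinder whose slices $\mathcal{I}_0 \times \{s\}$ are all equal, $\mathcal{I}_0$ is measurable, so $I_0$ exists, and
\[
I = \mu_n(\mathcal{I}) = \mu\bigl(\mathcal{I}_0 \times \ZZ_p\bigr) = \mu_{n-1}(\mathcal{I}_0)\cdot\mu(\ZZ_p) = I_0 .
\]
I expect the only real obstacle to be the measure-preservation of $\Psi$; once one records that the coordinate change is polynomial with $\ZZ_p$-coefficients and that $n$ is a $p$-adic unit, the residue-class bijection and everything after it is bookkeeping. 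A secondary (and genuinely routine) nuisance is the measurability accounting needed to honor the ``if it exists'' hypothesis, which is immediate from $\Psi$ being a measure-preserving homeomorphism.
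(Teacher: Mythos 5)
Your proposal is correct and takes essentially the same route as the paper: depress the polynomial via $x \mapsto x + a$ with $a$ determined by the $x^{n-1}$-coefficient divided by the unit $n$, and use the residue-class-permutation argument of Lemma \ref{lemmaMap} to see that this preserves measure and irreducibility. The paper applies the shift for each fixed $a$ separately and then asserts that irreducibles are equidistributed over the $x^{n-1}$-slices; your global map $\Psi$ and the explicit Fubini step merely make that final slicing argument precise, which is a welcome but not essentially different refinement.
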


\begin{proof}

We map $x \mapsto x+a$ for any $a \in \ZZ_p[x]$, which will not change whether any polynomials are irreducible or change the size of any open sets in $\ZZ_p^{n}$. This will map all polynomials in $\ZZ_p[x]$ with $x^{n-1}$ term of 0 to all polynomials in $\ZZ_p[x]$ with $x^{n-1}$ term of $na$, while preserving the size of all open sets. As long as $(n,p) = 1$, we can set $a = b/n$. This map tells us that the Haar measure of the set of irreducible polynomials with $x^{n-1}$ term of $b$ is the same as the Haar measure of the set of irreducible polynomials with $x^{n-1}$ term of $0$. So, irreducible polynomials are equidistributed among $x^{n-1}$ terms.
\end{proof}

From now on, we will only consider the polynomials with $x^{n-1}$ term of 0. This reduces our problem from looking at measures within $\ZZ_p^{n}$ to measures within $\ZZ_p^{n-1}$. This corollary shows $I$ is equal to the Haar measure of the set of degree $n$ polynomials with no $x^{n-1}$ term in $\ZZ_p^{n-1}$ which are irreducible. We have one more lemma which will convert (\ref{eqsrthesis}) into this form.

\begin{lemma}\label{lemmaSrThesis}
   Let $(n,p) = 1$. Let $\nu'(n)$ be the number of irreducible polynomials in $\FF_p[x]$ of degree $n$ with no $x^{n-1}$ term. Then

    \begin{equation}\label{eqsrthesis2}
    \nu'(n) = \frac{1}{np}\sum_{d\mid n}\mu \left(\frac{n}{d}\right)q^{d}
    \end{equation} 
\end{lemma}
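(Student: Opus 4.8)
The plan is to mimic, now over the finite field $\FF_p$ rather than over $\ZZ_p$, the shift argument already used in Lemma \ref{lemmaMap} and Corollary \ref{lemmatrace}. The key observation is that for each $a \in \FF_p$ the substitution $x \mapsto x+a$ is a ring automorphism of $\FF_p[x]$ fixing $\FF_p$, hence it permutes the monic degree-$n$ polynomials and preserves irreducibility; its inverse is $x \mapsto x-a$, so it is a bijection on the set of monic irreducibles of degree $n$.

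Next I would track the effect of this substitution on the $x^{n-1}$ coefficient. Writing $f(x) = x^n + c_{n-1}x^{n-1} + \cdots + c_0$ and expanding $f(x+a)$ by the binomial theorem, the only contributions to the $x^{n-1}$ term come from $(x+a)^n$ and from $c_{n-1}(x+a)^{n-1}$, giving an $x^{n-1}$ coefficient of $c_{n-1} + na$. Since $(n,p)=1$, the element $n$ is a unit in $\FF_p$, so as $a$ ranges over $\FF_p$ the quantity $na$ ranges over all of $\FF_p$. Therefore, for each $b \in \FF_p$, the substitution with $a = b/n$ restricts to a bijection from the set of monic irreducible degree-$n$ polynomials with $x^{n-1}$ coefficient $0$ onto those with $x^{n-1}$ coefficient $b$.

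Consequently the $p$ possible values of the $x^{n-1}$ coefficient partition the $\nu(n)$ monic irreducible polynomials of degree $n$ into $p$ blocks of equal size, so $\nu'(n) = \nu(n)/p$. Substituting the formula \eqref{eqsrthesis} for $\nu(n)$ then yields $\nu'(n) = \frac{1}{np}\sum_{d\mid n}\mu\!\left(\tfrac{n}{d}\right)p^{d}$, which is \eqref{eqsrthesis2} (the symbol $q$ there being $p$).

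There is no real obstacle here; the argument is elementary. The only points requiring a little care are the explicit verification that the $x^{n-1}$ coefficient transforms as $c_{n-1}\mapsto c_{n-1}+na$ (so that exactly the hypothesis $(n,p)=1$ is what makes the $p$ blocks genuinely equinumerous) and the observation that this finite-field statement is logically independent of, though parallel to, Corollary \ref{lemmatrace}, which was phrased over $\ZZ_p$.
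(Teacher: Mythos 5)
Your proof is correct and follows essentially the same route as the paper: shift $x \mapsto x+a$, note the $x^{n-1}$ coefficient becomes $c_{n-1}+na$, use $(n,p)=1$ to conclude the irreducibles are equidistributed over the $p$ possible values of that coefficient, and divide the count from \eqref{eqsrthesis} by $p$. Your remark that the $q$ in \eqref{eqsrthesis2} should read $p$ correctly identifies a typo in the statement.
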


\begin{proof}

We start with the monic polynomials $f(x)$ mod $p$, and let $\nu(n)$ of these be irreducible. By mapping $x \mapsto x+a$, we add $na$ to the $x^{n-1}$ coefficient of $f(x)$. If we do this for the $p$ different values of $a \in \FF_p$, we get all $p$ possible values of the $x^{n-1}$ coefficient of $f(x)$, since $(n,p) = 1$. This means that these $\nu(n)$ polynomials are equidistributed among $x^{n-1}$ terms, so $\nu'(n) = \frac{1}{p}\nu(n)$. (\ref{eqsrthesis}) gives a formula for $\nu(n)$. \end{proof}

To approximate $I$, we can look at $n^{th}$ degree polynomials with no $x^{n-1}$ term modulo $p^k$ for $k \in \NN$ increasing. At each $k$, we will have some irreducible polynomials and some Hensel polynomials, which create open balls which are either irreducible or reducible. The challenging polynomials are those which are neither irreducible nor Hensel. We have to lift these to $p^{k+1}$ and then see how many of them are irreducible and how many are Hensel. As $k$ increases, this method helps us to get closer and closer to the actual value of $I$, and gives us a lower and upper bound using the measure of the known irreducibles and the measure of the known reducibles. As $k \to \infty$, this method will give an exact value for $I$. This is shown in Table \ref{tablecubics} for cubics (mod $5^k$). We now define the Newton polygon, which will be useful to simplify our problem. 

For a polynomial $f(x)$, introduce a point $(i,j)$ when the coefficient in front of $x^i$ in $f(x)$ has valuation $j$. The Newton polygon is the lower convex hull of these points. \textbf{If the Newton polygon contains multiple slopes, then the polynomial is reducible. If the Newton polygon contains only one slope and only two integer points $(i,j)$, then the polynomial is irreducible.} The ambiguous case is where the Newton polygon contains one slope but more than two integer points (\cite{gouvea} \S 7.4). We now introduce a  lemma to use this.

\begin{table}[]
\centering
\begin{tabular}{|l|l|l|l|}
\hline
k & Irreducible & Neither & Hensel \\ \hline
1 & 8 & 1 & 16 \\ \hline
2 & 20 & 5 & 0 \\ \hline
3 & 20 & 5 & 100 \\ \hline
4 & 40 & 85 & 0 \\ \hline
5 & 100 & 25 & 2000 \\ \hline
6 & 100 & 525 & 0 \\ \hline
7 & 200 & 425 & 12500 \\ \hline
\end{tabular}

\caption{Number of cubic irreducible polynomials, Hensel polynomials, and polynomials which are neither (mod $5^k$). All polynomials are monic with no $x^2$ term. Each level $k$ shows 25 polynomials for each neither polynomial (mod $5^{k-1}$), since there are 25 ways to extend this polynomial by adding a multiple of $5^{k-1}$ to its linear and constant coefficients.}\label{tablecubics}
\end{table}

\begin{lemma}\label{lemmaS}

Let the proportion (Haar measure in $\ZZ_p^{n-1}$) of monic $n^{th}$ degree polynomials in $\ZZ_p[x]$ which are irreducible be $I$. Let us assume that $I$ is well defined. Let $S$ be the set of $p$-adic polynomials $f(x)$ such that their Newton Polygon is fully above the line from $(0,n)$ to $(n,0)$. Then $I$ is equal to the proportion of monic polynomials which are irreducible in the set $\ZZ_p^{n-1} \setminus S$.

\end{lemma}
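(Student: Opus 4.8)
The plan is to prove a little more than asked and then subtract: I would show that the proportion of irreducible polynomials \emph{inside} $S$ is already exactly $I$. Since $\ZZ_p^{n-1}$ is the disjoint union of $S$ and $\ZZ_p^{n-1}\setminus S$ and the overall proportion of irreducibles is $I$, additivity of Haar measure then forces the proportion inside $\ZZ_p^{n-1}\setminus S$ to equal $I$ as well. The reason $S$ has irreducibility proportion $I$ is a self-similarity: the substitution $x\mapsto px$ carries $S$ bijectively onto all of $\ZZ_p^{n-1}$, rescales Haar measure by a constant, and preserves irreducibility.

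First I would rewrite $S$ in coordinates. Writing a monic degree-$n$ polynomial with no $x^{n-1}$ term as $f(x)=x^n+c_{n-2}x^{n-2}+\cdots+c_1x+c_0$ (an element of $\ZZ_p^{n-1}$), its Newton polygon always has $(n,0)$ as a vertex, and the region on or above the segment from $(0,n)$ to $(n,0)$ is a half-plane, hence convex; so the Newton polygon lies on or above that segment if and only if every plotted point $(i,v_p(c_i))$ does, i.e.\ if and only if $v_p(c_i)\ge n-i$ for $0\le i\le n-2$ (vacuous whenever $c_i=0$). Thus, in the coordinates $(c_0,\dots,c_{n-2})$, we have $S=p^{n}\ZZ_p\times p^{n-1}\ZZ_p\times\cdots\times p^{2}\ZZ_p$, so $\mu(S)=p^{-(2+3+\cdots+n)}=p^{\,1-n(n+1)/2}$.

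Next I would introduce $\psi\colon S\to\ZZ_p^{n-1}$ defined by $\psi(f)(x)=p^{-n}f(px)$. A short computation gives $\psi(f)(x)=x^n+\sum_{i=0}^{n-2}\bigl(c_ip^{-(n-i)}\bigr)x^i$, whose coefficients lie in $\ZZ_p$ precisely because $f\in S$; moreover $\psi$ is a bijection onto $\ZZ_p^{n-1}$ with inverse $g(x)\mapsto p^{n}g(x/p)$, it sends monic degree-$n$ polynomials with no $x^{n-1}$ term to the same, and acting coordinatewise it is multiplication by $p^{-(n-i)}$ on the $i$th slot, so it scales Haar measure by the fixed factor $\prod_{i=0}^{n-2}p^{n-i}=1/\mu(S)$; that is, $\mu(\psi(A))=\mu(A)/\mu(S)$ for every measurable $A\subseteq S$. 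Finally $\psi$ preserves irreducibility: $f$ is irreducible in $\ZZ_p[x]$ iff it is irreducible in $\QQ_p[x]$ (Gauss's Lemma), iff $f(px)$ is (the map $x\mapsto px$ is a $\QQ_p$-algebra automorphism of $\QQ_p[x]$), iff $p^{-n}f(px)=\psi(f)$ is (scaling by a unit of $\QQ_p$), iff $\psi(f)$ is irreducible in $\ZZ_p[x]$ (Gauss again, since $\psi(f)$ is monic).

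Putting this together, $\psi$ restricts to a bijection from $\{f\in S:f\text{ irreducible}\}$ onto $\{g\in\ZZ_p^{n-1}:g\text{ irreducible}\}$ that scales measure by $1/\mu(S)$, so $\mu(\{f\in S:f\text{ irreducible}\})=\mu(S)\cdot I$; i.e.\ the proportion of irreducibles inside $S$ equals $I$. Combining with $\mu(\{f\in\ZZ_p^{n-1}:f\text{ irreducible}\})=I$ and disjoint additivity over $\ZZ_p^{n-1}=S\sqcup(\ZZ_p^{n-1}\setminus S)$ yields $\mu(\{f\in\ZZ_p^{n-1}\setminus S:f\text{ irreducible}\})=I\bigl(1-\mu(S)\bigr)=I\cdot\mu(\ZZ_p^{n-1}\setminus S)$, which is the claim. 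I expect the one delicate point to be verifying the measure-rescaling factor of $\psi$ and checking that it is exactly $1/\mu(S)$; once the coordinate description of $S$ is pinned down, the remainder is bookkeeping together with standard facts (Gauss's Lemma and invariance of irreducibility under $x\mapsto px$).
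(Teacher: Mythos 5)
Your proposal is correct and follows essentially the same route as the paper: both reduce to showing the proportion of irreducibles inside $S$ equals $I$ via the rescaling map $f(x)\mapsto p^{-n}f(px)$, checking that it is a measure-rescaling bijection onto $\ZZ_p^{n-1}$ preserving irreducibility (Gauss's Lemma), and then conclude by additivity of the Haar measure. Your coordinate description of $S$ and the scaling factor $1/\mu(S)=p^{n(n+1)/2-1}$ match the paper's computation.
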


\begin{proof}
Notice that $S$ are exactly the polynomials with their $i^{th}$ term $c_i$ divisible by $p^{n-i}$, or having valuation $v_p(c_i) \geq n-i$. This means that $S$ is exactly the polynomials $f(x)$ where $g(x) = \frac{f(px)}{p^n} \in \ZZ_p[x]$. This is because the map $f \mapsto f(px)/p^n$ will divide each coefficient $c_i$ by $p^{n-i}$, and these coefficients must finish with valuation $\geq 0$. 

Let $P$ be the set of irreducible polynomials in $S$. Let $P'$ be the set of irreducible polynomials outside $S$. Then $I = (\mu(P) + \mu(P'))/(\mu(S) + \mu(\ZZ_p^{n-1}\setminus S))$. It suffices to show that $\mu(P)/\mu(S) = I$, since this would imply that $I = \mu(P')/\mu(\ZZ_p^{n-1}\setminus S)$, which is what we want. 

Consider the map $\phi: S \to \ZZ_p^{n-1}$ defined by $\phi(f(x)) = f(px)/p^n$. The map $\phi$ has an inverse $\phi^{-1}(g(x)) = p^n g(x/p)$. So $\phi$ is a bijection because of the existence of an inverse. By the definition of $S$, $\phi(f(x)) \in \ZZ_p[x]$. We will now show that $\phi$ preserves irreducibility. Using Gauss' Lemma, we look at irreducibility in $\QQ_p[x]$. Let $g(x) = g_1(x)g_2(x)$. Then $\phi^{-1}(g(x)) = p^n g(x/p) = p^{n}g_1(x/p)g_2(x/p)$. Let $f(x) = f_1(x)f_2(x)$. Then $\phi(f(x)) = p^{-n} f(px) = p^{-n} f_1(px)f_2(px)$. So $\phi$ preserves irreducibility.

Now we will show that $\phi$ preserves the measures of open balls proportionally. By looking at the requirement for each coefficient, $\mu(S)$ consists of all polynomials with $i^{th}$ coefficient $c_i$ divisible by $p^{n-i}$. Within $\ZZ_p^{n-1}$, $\mu(S) = (p^{-2})(p^{-3}) \cdots (p^{-n}) = p^{-\frac{n(n+1)-2}{2}}$. Now we will see how the size of an open set changes under $\phi$, using $\phi^{-1}$. Consider the product of open balls in $\ZZ_p^{n-1}$ written as \[ O = \prod_{i = 1}^{n-2} B(y_i, p^{-r_i}).\] This is a basis for all open sets in the product topology on $\ZZ_p^{n-1}$. Using the Haar measure, $\mu(O) = p^{-k}$, where $k = \sum_{i=1}^{n-2} r_i$. Going coefficient by coefficient, we can see how $\phi^{-1}$ acts on $O$, multiplying $c_i$ by $p^{n-i}$. This gives \[\phi^{-1}(O) = \prod_{i = 1}^{n-2} B(p^{n-i} y_i, p^{-r_i-(n-i)}).\] Using the Haar measure, $\mu(\phi^{-1}(O)) = p^{-j}$, where $j = \sum_{i=1}^{n-2} (r_i+ n-i) = k + \frac{n(n+1)-2}{2}$. So $\mu(\phi^{-1}(O))/\mu(S) = p^{-k} = \mu(O)/\mu(\ZZ_p^{n-1})$. This means that $\phi$ preserves the proportional measures of each open ball. Since all open sets can be constructed as a union of open balls, $\phi$ preserves the proportional measure of all open sets. Since $\phi$ preserves irreducibility, this gives that $\mu(P)/\mu(S) = I$, which implies what we want.
\end{proof}

\section{Prime Degree Polynomials}\label{sectionprimes}

\begin{thm}\label{theoremprimes}
    Let the proportion of polynomials in $\ZZ_p[x]$ of degree $r$ with $r$ prime, $r \neq p$ which are irreducible be $I$. Then  \[ I = \frac{(p^r-p)(1-p^{-(r+1)/2}) + r(p-1) (1 - p^{-(r+1)(r-1)/2})}{rp^r (1-p^{-r(r+1)/2 + 1})(1-p^{-(r+1)/2})}.\]
\end{thm}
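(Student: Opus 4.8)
The plan is to compute $I$ by stratifying the degree-$r$ polynomials with no $x^{r-1}$ term according to the structure dictated by their reduction mod $p$ and, failing that, their Newton polygon. By Lemma \ref{lemmaS}, it suffices to work in $\ZZ_p^{r-1}\setminus S$, where $S$ consists of the polynomials whose Newton polygon lies strictly above the segment from $(0,r)$ to $(r,0)$. The key structural simplification is that $r$ is \emph{prime}: a monic degree-$r$ polynomial over $\QQ_p$ that is reducible must, by looking at degrees of irreducible factors, either have a linear factor (equivalently a root in $\ZZ_p$) or split as a product whose factors have degrees summing to $r$ with at least two nontrivial pieces — but primality forces a root whenever the polynomial is not a single irreducible of degree $r$ over a totally ramified or unramified piece. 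More precisely, I would split into the unramified case (reduction mod $p$ is separable) and the ramified case (the relevant behavior is governed by a one-slope Newton polygon), because when $(r,p)=1$ wild ramification cannot occur and a degree-$r$ extension of $\QQ_p$ is either unramified, totally tamely ramified, or has a proper subextension — the last being impossible since $r$ is prime.

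The first step is the unramified/separable stratum. A polynomial whose reduction mod $p$ is irreducible of degree $r$ is itself irreducible, and by Lemma \ref{lemmaSrThesis} the count of such reductions with no $x^{r-1}$ term is $\nu'(r)=\frac{1}{rp}\sum_{d\mid r}\mu(r/d)p^d = \frac{p^r-p}{rp}$ (using $r$ prime). Each contributes an open ball of measure $p^{-(r-1)}$ of irreducibles. A polynomial whose reduction mod $p$ is reducible and \emph{separable} is Hensel by form (2) of Hensel's Lemma, hence reducible. So within the stratum where the reduction is separable, the only remaining polynomials are those congruent mod $p$ to $g(x)^r$ for $g$ linear (the inseparable reductions with no $x^{r-1}$ term); after translating by Corollary \ref{lemmatrace} we may take $g(x)=x$, so these are the polynomials in $S$, i.e. exactly the set excised in Lemma \ref{lemmaS}. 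This is the crucial point: the "neither irreducible nor Hensel" polynomials mod $p$ are precisely (a translate of) $S$, so Lemma \ref{lemmaS} tells us the conditional proportion of irreducibles inside $S$ is again $I$ — but $S$ is self-similar via $\phi$, giving a recursion. The second step is therefore to set up this recursion carefully, tracking measures: the measure of the "decided irreducible" part at the first level is $\nu'(r)\cdot p^{-(r-1)} = \frac{p^r-p}{rp}\cdot p^{-(r-1)}$, the measure of $S$ is $p^{-(r(r+1)-2)/2}$, and the self-similarity means $I\cdot\mu(\ZZ_p^{r-1}) = (\text{decided irreducibles}) + I\cdot \mu(S)$ plus the contribution of polynomials that are reducible \emph{because of a nontrivial one-slope Newton polygon with a root} — and here is where the second term in the numerator, involving $p^{-(r+1)(r-1)/2}$, must come from.

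The third step is to identify that missing contribution: polynomials in $\ZZ_p^{r-1}\setminus S$ which have a root in $\ZZ_p$ but whose reduction mod $p$ is $x^r$. Such a polynomial has all coefficients $c_i$ with $v_p(c_i)\ge 1$ but is \emph{not} in $S$ (some $c_i$ has $v_p(c_i)<r-i$); its Newton polygon has a single slope $-s$ with $s=v_p(c_0)/r$ a positive rational, and when $s$ is not an integer the polygon has only the two lattice endpoints and the polynomial is irreducible (totally tamely ramified), while when $s=1,\dots$ is an integer there may be a root, which by Hensel splits off. I would parametrize these by the slope and count, inside $\ZZ_p^{r-1}\setminus S$, the measure of those with an integer-slope Newton polygon admitting a $\ZZ_p$-root versus those that are forced irreducible. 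Because $r$ is prime and $p\ne r$, a one-slope polygon of integer slope $m$ on $[0,r]$ either is irreducible (no intermediate lattice point issue makes it reducible unless the associated "residual polynomial" in $\FF_p$ factors) or picks up a root when the residual polynomial has a root in $\FF_p$; the Eisenstein-at-slope count gives the $(1-p^{-(r+1)/2})$ geometric-series factors (the exponent $(r+1)/2$ being $\sum_{i=1}^{r-1} i /$ something — it will fall out of the valuation bookkeeping of $S$, since $r(r+1)/2 - 1 = (r-1)(r+2)/2$ and $(r+1)/2$ appears as a ramified sub-ratio). Assembling: solve the linear equation $I(1-\mu(S)) = A + B\cdot(\text{ramified-irreducible measure}) - C\cdot(\text{ramified-with-root measure})$ for $I$, clear denominators, and match against the claimed formula.

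I expect the main obstacle to be the third step — correctly accounting for the polynomials with reduction $x^r$ that fall \emph{outside} $S$. One must carefully separate, among the one-slope Newton polygons with integer slope, those that are irreducible (residual polynomial irreducible over $\FF_p$, or slope coprime-to-$r$ forcing only two lattice points) from those that become reducible via Hensel, and do so proportionally as the slope ranges over $1,2,\dots$ and the "depth" in $S$ increases; this is exactly where the two distinct geometric factors $(1-p^{-r(r+1)/2+1})$ and $(1-p^{-(r+1)/2})$ in the denominator arise, one from iterating $\phi$ on $S$ and one from summing over integer slopes of the residual Eisenstein pieces. Keeping these two nested geometric series straight — and checking they do not double-count polynomials lying in $S$ at one level but not the next — is the delicate bookkeeping; everything else is the routine algebra of combining finitely many geometric series and simplifying to the stated rational function in $p$.
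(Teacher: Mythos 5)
Your overall architecture is right and matches the paper's: reduce to trace-zero polynomials, handle the separable-reduction stratum with Hensel's Lemma and Lemma \ref{lemmaSrThesis}, observe that the only problematic reduction is $x^r$, and use Lemma \ref{lemmaS} to divide out by $1-\mu(S)$ at the end (this is where $1-p^{-r(r+1)/2+1}$ comes from). But there are two genuine problems in the middle of your argument.

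First, your second step asserts that the polynomials congruent to $x^r\pmod p$ are ``exactly the set excised in Lemma \ref{lemmaS}.'' That is false: $S$ requires $v_p(c_i)\ge r-i$ for every $i$, whereas reduction to $x^r$ only gives $v_p(c_i)\ge 1$. The set $S$ has measure $p^{-(r(r+1)-2)/2}$, while the reduction-to-$x^r$ stratum has measure $p^{-(r-1)}$; the gap between them is precisely the nontrivial part of the theorem. You partially recover in your third step, but the recursion you set up in step two is built on this misidentification.

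Second, and more importantly, your mechanism for the third step is wrong for prime $r$, and this is exactly where the second numerator term comes from. An irreducible polynomial reducing to $x^r$ and lying outside $S$ has a one-slope Newton polygon from $(0,e)$ to $(r,0)$ with $0<e<r$; since $r$ is prime, $\gcd(e,r)=1$, so this segment has \emph{no} intermediate lattice points and the Newton polygon criterion gives irreducibility outright. There are no residual polynomials to factor, no integer slopes, and no ``ramified-with-root'' measure to subtract --- the slope $-e/r$ is never an integer for $0<e<r$. (Your stated criterion ``slope not an integer implies only two lattice points'' is also false in general --- consider slope $-1/2$ on $[0,4]$ --- it holds here only because $r$ is prime.) What actually remains is a measure computation: the paper counts the lattice points strictly below the segment from $(0,e)$ to $(r,0)$ via Pick's theorem to get that the stratum with constant-term valuation exactly $e$ has measure $(p-1)p^{-(re+r+e-1)/2}$, and summing this geometric series over $e=1,\dots,r-1$ produces the factor $r(p-1)(1-p^{-(r+1)(r-1)/2})/(1-p^{-(r+1)/2})$ in the claimed formula. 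Your proposal gestures at this (``it will fall out of the valuation bookkeeping'') but supplies neither the lattice-point count nor the correct reason that every such polynomial is irreducible, so the computation as proposed would not close.
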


\begin{proof}

Using Corollary \ref{lemmatrace}, we will only need to consider polynomials $g(x) \in \ZZ_p[x]$ with no $x^{r-1}$ term, and we will find which are irreducible. We will start by considering the mod $p$ reduction of $g(x)$ into $\FF_p[x]$. We will show that we only need to consider polynomials which are either irreducible when reduced to $\FF_p[x]$, or congruent to $x^r$ when reduced to $\FF_p[x]$. In other words, we need to show that the only reducible polynomial in $\FF_p[x]$ which can be the reduction of an irreducible $g(x) \in \ZZ_p[x]$ is $x^r$.

All of the reducible polynomials $f(x)$ in $\FF_p [x]$ satisfy form 2 of Hensel's Lemma by having relatively prime polynomial factors in $\FF_p [x]$, unless they only have one irreducible factor $\pi(x)$ in $\FF_p [x]$. In this case, $f(x) = \pi(x)^e$, with $e>1$ and $\deg \pi(x) = d$. Since $de = r$ with $r$ prime, $d = 1$. Without loss of generality, let $\pi(x)$ be monic, so $\pi(x) = x+a$ for $a \in \FF_p$, $f(x) = (x+a)^r$. The $x^{r-1}$ term of this gives that $ra \equiv 0 \pmod{p}$. Since $(p,r) = 1$, $a \equiv 0 \pmod{p}$. So, the only case where (2) in Hensel's Lemma does not apply is when $f(x) = x^r$. In all other cases, any polynomial in $\ZZ_p[x]$ which reduces to $f(x)$ is reducible by Hensel's Lemma.

First, consider polynomials in $\ZZ_p[x]$ which reduce to irreducible polynomials in $\FF_p[x]$. Consider the monic polynomials $f(x) \in \FF_p [x]$  with no $x^{r-1}$ term. By Lemma \ref{lemmaSrThesis}, $\frac{1}{pr} (p^{r}-p)$ of these are irreducible. Each of these creates an irreducible ball of size $p^{-r+1}$ by only considering terms after $x^{r-1}$. This gives a total measure within $\ZZ_p^{r-1}$ of $\frac{p^r-p}{rp^r}$, of irreducible polynomials which reduce to irreducible polynomials in $\FF_p [x]$.

Now, consider polynomials $f(x) \in \ZZ_p[x]$ which reduce to $x^r$ in $\ZZ_p[x]$. We look at the Newton polygons of these polynomials. Each Newton polygon will start at $(0,e)$, for $e>0$ being the valuation of the constant term, and end at $(r,0)$. By Lemma \ref{lemmaS}, we only need to consider all polynomials with Newton polygon below the line from $(0,r)$ to $(r,0)$ at some point. Any irreducible polynomial with Newton polygon below this line at some point must have its whole Newton polygon below this line, since the Newton polygon can only have one slope, and it ends at $(r,0)$. Therefore, we consider all irreducible polynomials with $0<e<r$.

The Newton polygons of these polynomials must have only one slope, since they are irreducible. They start at $(0,e)$, and end at at $(r,0)$. This line has no integer points between its endpoints, so there can never be any ambiguous cases. The set of polynomials with Newton polygon having one slope from $(0,e)$  to $(r,0)$ is all polynomials with $x^i$ term having valuation above $e - ie/r$ for $0 \leq i \leq r-2$ and constant term with valuation exactly $e$. The measure of the set of polynomials with this Newton polygon is $((p-1)p^{-e-1})p^{-\alpha}$, where $\alpha$ is the number of integer points $(i,j)$ with $0 \leq j \leq e - ie/r$, with $1 \leq i \leq r-2$. This is because each point below this line multiplies the measure of this set by $p^{-1}$, except when $i = 0$, where the measure of all of $\ZZ_p$ with valuation exactly $e$ is $(p-1)p^{-e-1}$. 

Pick's Theorem says that $re/2 = J + \frac{r + e + 1}{2} - 1$, where $J$ is the number of interior lattice points of the triangle with corners at $(0,0)$, $(0,e)$, $(r,0)$. So $J = \frac{re-r-e+1}{2}$. Combining these with the points with $j = 0$ and $1 \leq i \leq r-2$, we get that $\alpha = \frac{re-r-e+1}{2} + r-2$. So the measure of the set of polynomials with Newton polygon equal to $y = e - xe/r $ is \[(p-1)p^{-e-1-\frac{re-r-e+1}{2} - (r-2)} = (p-1)p^{-\frac{re+r+e-1}{2}}.\] Summing over all $e$, the measure of all of these polynomials is $(p-1) \sum_{e=1}^{r-1} p^{-(re+r+e-1)/2}$.

Including polynomials not congruent to $x^r \pmod{p}$, the measure of all of the irreducible polynomials which have Newton polygon below the line from $(0,r)$ to $(r,0)$ at some point is \[\frac{p^r-p}{rp^r} + (p-1) \sum_{e=1}^{r-1} p^{-(re+r+e-1)/2}.\] To get a formula for $I$, we must divide this by the total measure of all monic polynomials which have Newton polygon below the line from $(0,r)$ to $(r,0)$ at some point, according to Lemma \ref{lemmaS}. Considering only polynomials with no $x^{r-1}$ term, and using the method of counting points below this line to calculate measure, this is $1 - p^{-r(r+1)/2 + 1}$.

We can now simplify our formula for $I$:
\begin{align*}
I &= \frac{\frac{p^r-p}{rp^r} + (p-1) \sum_{e=1}^{r-1} p^{-(re+r+e-1)/2}}{1-p^{-r(r+1)/2 + 1}} \\
&= \frac{\frac{p^r-p}{rp^r} + (p-1) p^{(1-r)/2} p^{-(r+1)/2} \sum_{e=0}^{r-2} (p^{-(r+1)/2})^{e}}{1-p^{-r(r+1)/2 + 1}} \\
&= \frac{\frac{p^r-p}{rp^r} + (p-1) p^{-r} \frac{1 - p^{-(r+1)(r-1)/2}}{1-p^{-(r+1)/2}} }{1-p^{-r(r+1)/2 + 1}}  \\
&= \frac{(p^r-p)(1-p^{-(r+1)/2}) + r(p-1) (1 - p^{-(r+1)(r-1)/2})}{rp^r (1-p^{-r(r+1)/2 + 1})(1-p^{-(r+1)/2})}
\end{align*}

\end{proof}

We wonder whether this result holds for $r = p$. This has been proven to hold for $p = r = 2$ and $p = r = 3$ (\cite{bhargava}). 

\section{Proportion of Quartics That Are Irreducible}\label{sectionquartics}

\begin{thm}\label{theoremquartics}
    Let $p \neq 2$. Let the proportion of monic polynomials in $\ZZ_p[x]$ of degree $4$ which are irreducible be $I$. Then  \[ I = \frac{p^{12}+p^{11}+2p^{10}+4p^9 - 2p^8 + p^7 + 3p^6 - 6p^5 + 6p^4 - 4p^3 - 2p^2 - 4}{4(p+1)(p^2+1)(p^9-1)}.\]
\end{thm}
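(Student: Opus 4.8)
The plan is to mimic the structure of the proof of Theorem~\ref{theoremprimes}, but now account for the fact that $4$ is composite, so that a monic quartic in $\ZZ_p[x]$ can be reducible over $\QQ_p$ without having a linear factor (it may split as a product of two irreducible quadratics), and — crucially — that polynomials mod $p^k$ no longer have unique factorization, so we must carefully avoid double-counting. By Corollary~\ref{lemmatrace} we may restrict to quartics with no $x^3$ term, working inside $\ZZ_p^3$; by Lemma~\ref{lemmaS} we may further restrict to the set $\ZZ_p^3 \setminus S$ where $S$ consists of those polynomials whose Newton polygon lies weakly above the segment from $(0,4)$ to $(4,0)$, and then $I$ is the proportion of irreducibles in $\ZZ_p^3 \setminus S$; the denominator of $I$ will be $\mu(\ZZ_p^3 \setminus S) = 1 - p^{-4\cdot 5/2 + 1} = 1 - p^{-9}$ (suitably combined with the $(p+1)(p^2+1)$ factors that will emerge from geometric series, explaining the shape of the stated answer).

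First I would classify the monic quartics over $\FF_p$ by factorization type and apply Hensel's Lemma (form~2): any quartic over $\FF_p$ that factors into two coprime pieces forces all of its lifts to be reducible, so the only mod-$p$ reduction classes that can lift to an irreducible are the irreducible quartics themselves, the classes $\pi(x)^2$ with $\pi$ an irreducible quadratic, and the classes $(x+a)^4$ — and by the no-$x^3$-term normalization the last is just $x^4$. The count of irreducible quartics over $\FF_p$ with no cubic term is $\nu'(4) = \frac{1}{4p}(p^4 - p^2)$ by Lemma~\ref{lemmaSrThesis}, each giving an irreducible ball of measure $p^{-3}$ in $\ZZ_p^3$; similarly the classes $\pi(x)^2$ contribute their own (smaller) balls once we check which lifts stay irreducible. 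The genuinely hard part is the remaining two families: lifts of $x^4$, and lifts of $\pi(x)^2$ for $\pi$ an irreducible quadratic. For the $x^4$ family I would run a Newton-polygon analysis as in Theorem~\ref{theoremprimes}: a lift of $x^4$ with constant-term valuation $e$ has Newton polygon from $(0,e)$ to $(4,0)$; if this polygon has a unique slope that is not an integer (i.e.\ $e$ odd, or $e=2$ with the middle coefficient not interfering), the polynomial is irreducible by the two-lattice-point criterion, while multiple slopes make it reducible. The subtle sub-case is slope~$1/2$ or similar where $(1, e - e/4)$ or $(2, e/2)$ is a lattice point on the segment — here the polygon is ambiguous and one must pass to a further lift, or analyze the "residual" polynomial $g(y)$ obtained by the substitution making the slope integral, recursing into a degree-$2$ irreducibility question which is itself governed by whether a certain element of $\FF_p^\times$ (or a quadratic residue condition) holds; this is where the hypothesis $p \neq 2$ is used, since it guarantees the quadratic formula and a clean residue/nonresidue dichotomy with each class having density $1/2$.

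Then I would handle the $\pi(x)^2$ family: a lift $f$ of $\pi(x)^2$ with $\pi$ irreducible quadratic over $\FF_p$ is reducible over $\QQ_p$ if it actually factors as a product of two quadratics lifting $\pi$, and irreducible otherwise; this is controlled by a Hensel-type condition measuring whether $f$, viewed in the unramified quadratic extension $\QQ_{p^2}$, has a root — i.e.\ $f \pmod{\pi(x)^2}$ determines a "discriminant-like" obstruction, and the proportion of lifts that split versus stay irreducible is again a clean ratio once $p \neq 2$. I would organize this by first computing, for each residue class mod $p$, the measure of lifts that are \emph{provably reducible} (Hensel) and \emph{provably irreducible} (Newton polygon with two lattice points, or irreducible mod $p$), sum the geometric series over the valuation parameter $e$ (these produce the $1 - p^{-(r+1)(r-1)/2}$-type factors and ultimately the $(p+1)(p^2+1)(p^9-1)$ in the denominator), and for the residually-ambiguous slabs recurse one level. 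The main obstacle I anticipate is bookkeeping the overlaps: unlike the prime case, a single polynomial mod $p^k$ can be "reached" both as a lift of $x^4$ along one slope decomposition and as a lift of a product of quadratics along another, so I expect to need a careful inclusion–exclusion (or a normal-form theorem pinning down a unique "type" to each polynomial via its Newton polygon together with one residual invariant) to make sure the measures of the irreducible pieces add up without duplication — this is presumably the content the paper flags when it says "Much of our work for quartics is based on counting the different factorizations of the same polynomials." Once all contributions are assembled and divided by $1 - p^{-9}$ (times the auxiliary geometric-series factors), collecting terms over a common denominator should yield exactly the stated rational function; I would finish by sanity-checking the leading behavior against Theorem~\ref{theoremapprox}, namely $I \to 1/4$ as $p \to \infty$, which the degree-$12$ numerator over $4(p+1)(p^2+1)(p^9-1) \sim 4p^{12}$ indeed satisfies.
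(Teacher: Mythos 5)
Your outline reproduces the paper's skeleton: restrict to trace zero via Corollary \ref{lemmatrace}, divide at the end by $1-p^{-9}$ via Lemma \ref{lemmaS}, use Hensel's Lemma (form 2) to dispose of every mod-$p$ reduction type except the irreducible quartics, the classes $\pi(x)^2$ with $\pi$ an irreducible quadratic, and $x^4$, and then treat the $x^4$ family by Newton polygons, with $e=1,3$ immediate and $e=2$ (slope $-1/2$) ambiguous. All of that is right and is exactly how the paper proceeds.

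The gap is in the two computations you defer to "a clean ratio once $p\neq 2$" and a "clean residue/nonresidue dichotomy with each class having density $1/2$." These are precisely the paper's Lemma \ref{lemmaDoubleQUadraticsAlias} and Lemma \ref{lemmaSlope0.5}\footnote{I.e.\ Lemma \ref{lemmaDoubleQUadratics} for the $\pi(x)^2$ family and Lemma \ref{lemmaSlope0.5} for the slope-$1/2$ family.}, and the answers are \emph{not} $1/2$: they are $\frac{2p^2+1}{2(p^2+1)}$ and $\frac{p^2+3p+1}{2p(p+1)}$ respectively, and the final numerator of the theorem is unrecoverable without them. No single discriminant-type obstruction decides either case at one finite level. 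The paper must parametrize every candidate factorization in a difference-of-squares normal form $f_{(A,B,C)}(x)=(x^2+pA+\zeta)^2-p^2(Bx+C)^2$, determine the exact modulus ($p^{2k+3}$, $p^{2k+4}$, or $p^{2k+5}$, depending on the valuations of $B$ and $C$) at which such a factorization becomes Hensel (Claims \ref{claim1} and \ref{claim3}), and count how many distinct residue classes these factorizations occupy at each level, quantifying the failure of injectivity of $(A,B,C)\mapsto f_{(A,B,C)}$ (Claims \ref{claim2}, \ref{claim4}, \ref{claim5}, \ref{claim6}); this last step is where your anticipated inclusion--exclusion over "different factorizations of the same polynomial" is actually carried out. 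The resulting counts alternate with the parity of the level — in the $\pi^2$ family, at even levels all non-ambiguous lifts are irreducible while at odd levels exactly half are Hensel — and only the geometric series over all levels yields the stated ratios. So the structure of your argument is the paper's, but the quantitative core is missing, and the placeholder densities you suggest would produce the wrong rational function.

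\renewcommand{\lemmaDoubleQUadraticsAlias}{}
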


To prove this, we will have to prove two main lemmas about specific sets of polynomials which are hard to classify, Lemma \ref{lemmaDoubleQUadratics} and Lemma \ref{lemmaSlope0.5}. We defer the proofs of many claims within these lemmas to Appendix \ref{appendix}. These two lemmas are about cases where a quartic can be factored as a product of two quadratic factors which are congruent $\pmod{p}$, and these require a lot of work to classify because the Newton polygon is no longer conclusive. After these lemmas are proven, the remaining work will be identical to Section \ref{sectionprimes}, using arguments of $\FF_p[x]$ and Newton Polygons to complete the calculations. Throughout this whole section, we consider monic polynomials with no $x^4$ term, following Corollary \ref{lemmatrace}.

\begin{lemma}\label{lemmaDoubleQUadratics}

Let $p$ prime, $p \neq 2$. Let $\pi(x) = x^2 + \zeta$ in $\ZZ_p[x]$ be irreducible when viewed modulo $p$, so $-\zeta$ is not a square modulo $p$. Let $f(x) = \pi(x)^2 = (x^2 + \zeta)^2$. Let the proportion of monic $4^{th}$ degree polynomials in $\ZZ_p[x]$ congruent to $f(x) \pmod{p}$ which are irreducible be $I$. Then \[I = \frac{2p^2 + 1}{2(p^2 + 1)}.\]

\end{lemma}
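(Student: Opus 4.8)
The plan is to compute the complementary quantity, the proportion of \emph{reducible} polynomials in the family $F$ of monic quartics with no $x^3$ term that are congruent to $f(x)=(x^2+\zeta)^2\pmod p$; here $\mu(F)=p^{-3}$ inside $\ZZ_p^3$, so $I=1-\mu(\text{reducible in }F)/\mu(F)$. First I would pin down the shape of a reducible $g\in F$. Reducing mod $p$, any monic factorization of $g$ reduces to a factorization of $(x^2+\zeta)^2$ in $\FF_p[x]$; since $x^2+\zeta$ is irreducible there, $g$ has no linear factor over $\ZZ_p$, and if $g$ is reducible it must split as $g=(x^2+sx+t)(x^2+ux+v)$ with $s\equiv u\equiv 0$ and $t\equiv v\equiv\zeta\pmod p$. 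The vanishing $x^3$ coefficient forces $u=-s$, whence $b=s(v-t)\in p^2\ZZ_p$; thus every reducible polynomial of $F$ lies in the smaller family $F'=\{a\equiv2\zeta,\ b\equiv0\ (\mathrm{mod}\ p^2),\ c\equiv\zeta^2\ (\mathrm{mod}\ p)\}$, with $\mu(F')/\mu(F)=p^{-1}$. Writing $s=p\mu$, $t=\zeta+p\rho$, $v=\zeta+p\theta$ and using the affine coordinates $(\alpha,\delta,\gamma)$ on $F'$ given by $a=2\zeta+p\alpha$, $b=p^2\delta$, $c=\zeta^2+p\gamma$ (an identification $F'\cong\ZZ_p^3$), the reducible polynomials of $F'$ are exactly the image of
\[ \Xi:\ZZ_p^3\to\ZZ_p^3,\qquad \Xi(\mu,\rho,\theta)=\bigl(\rho+\theta-p\mu^2,\ \mu(\theta-\rho),\ \zeta(\rho+\theta)+p\rho\theta\bigr). \]
Consequently $\mu(\text{reducible in }F)/\mu(F)=\tfrac1p\,\mu(\Ima\Xi)$, and everything reduces to computing $\mu(\Ima\Xi)$.

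Next I would observe that $\Xi$ is essentially two-to-one. The involution $(\mu,\rho,\theta)\mapsto(-\mu,\theta,\rho)$ fixes $\Xi$ (it swaps the two quadratic factors), and conversely, since $g$ has no linear factor, a reducible $g$ is a product of two irreducible quadratics in an essentially unique way by Gauss's Lemma, so the fibers of $\Xi$ consist of exactly two points away from the measure-zero locus on which $g$ is a perfect square. Choosing a fundamental domain for the involution, applying the injective $p$-adic change-of-variables formula on it, and using that $|J_\Xi|$ is invariant under the involution, I get $\mu(\Ima\Xi)=\tfrac12\int_{\ZZ_p^3}|J_\Xi|$. A direct computation gives $J_\Xi=p\bigl(2\mu^2(2\zeta+p(\rho+\theta))+(\theta-\rho)^2\bigr)$.

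Then I would evaluate the integral. After the substitution $(\mu,\rho,\theta)\mapsto(u,w,\epsilon)=(\mu,\rho+\theta,\theta-\rho)$, which is measure-preserving since $p\neq2$, the Jacobian becomes $p\bigl(2u^2(2\zeta+pw)+\epsilon^2\bigr)$. Here $2\zeta+pw$ is a unit and, crucially, $-4\zeta$ is a non-square mod $p$ by hypothesis, so the summands $2u^2(2\zeta+pw)$ and $\epsilon^2$ have even $p$-adic valuation and cannot cancel when their valuations coincide; hence $v_p(J_\Xi)=1+2\min(v_p(u),v_p(\epsilon))$, i.e. $|J_\Xi|=\tfrac1p\max(|u|,|\epsilon|)^2$, independent of $w$. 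Therefore
\[ \int_{\ZZ_p^3}|J_\Xi|=\frac1p\int_{\ZZ_p^2}\max(|u|,|\epsilon|)^2\,du\,d\epsilon=\frac1p\sum_{j\ge0}(1-p^{-2})p^{-4j}=\frac1p\cdot\frac{p^2}{p^2+1}=\frac{p}{p^2+1}. \]
Hence $\mu(\Ima\Xi)=\dfrac{p}{2(p^2+1)}$, the reducible proportion equals $\dfrac1p\cdot\dfrac{p}{2(p^2+1)}=\dfrac{1}{2(p^2+1)}$, and $I=1-\dfrac{1}{2(p^2+1)}=\dfrac{2p^2+1}{2(p^2+1)}$.

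The main obstacle is the second step: rigorously establishing that the fibers of $\Xi$ have size exactly two off a null set — this rests on unique factorization in $\ZZ_p[x]$ (Gauss's Lemma) together with the absence of linear factors, which together bound the number of ways to write $g$ as a product of two quadratics — and checking that the exceptional loci (where $J_\Xi=0$ and where $g$ is a perfect square) are genuinely measure zero, so that the change-of-variables bookkeeping is legitimate. These more technical verifications are the natural candidates to relegate to the Appendix; the computations in the first and third steps are then routine.
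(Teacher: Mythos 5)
Your computation is correct and reaches the right value of $I$, but by a genuinely different route from the paper. The paper never integrates: it works level by level mod $p^j$, classifying each residue class as irreducible, ``Ambiguous'' (equal to some $(x^2+pA+\zeta)^2$, hence needing a further lift), or Hensel, and it uses two quantitative forms of Hensel's Lemma (Claims \ref{claim1} and \ref{claim2}) to show that a factorization first becomes visible at level $p^{2k+3}$ and to count exactly $(p^2-1)/2$ Hensel classes above each Ambiguous one at odd levels; summing the resulting geometric series over $j$ gives $I$. You instead compute the Haar measure of the reducible locus in one shot as the measure of the image of the parametrization $\Xi$, using the $p$-adic change-of-variables formula together with the fact that $\Xi$ is generically $2$-to-$1$ (unique factorization into two irreducible quadratics, factor swap as the deck involution). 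It is worth noting that the same arithmetic input drives both arguments: your non-cancellation in $2u^2(2\zeta+pw)+\epsilon^2$, which forces $|J_\Xi|=p^{-1}\max(|u|,|\epsilon|)^2$, is exactly the role played in the paper's Claim \ref{claim2} by passing to $\FF_p[\chi]$ with $\chi^2=-\zeta$ and concluding $(c+\chi b)^2=(c'+\chi b')^2\Rightarrow (b,c)=\pm(b',c')$. What your approach buys is brevity and the elimination of Hensel's Lemma altogether (you only need the total measure of the reducible set, not a reducibility verdict on each residue class), at the cost of invoking the change-of-variables machinery and the measure-zero bookkeeping you flag at the end; the paper's approach is more elementary and yields finer data (the exact counts at each level, as in Table \ref{tabledoublequadratics}) but is considerably longer. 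Your deferred technical points are genuine but unproblematic: the fixed locus of the involution and the vanishing locus of $J_\Xi$ are both the codimension-two set $\{\mu=0,\ \rho=\theta\}$, hence null, and local injectivity off that set follows from the nonvanishing Jacobian, so the standard countable-decomposition argument legitimizes $\mu(\mathrm{Im}\,\Xi)=\tfrac12\int|J_\Xi|$.
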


\begin{table}[]
\centering
\begin{tabular}{|l|l|l|l|}
\hline
k & Irreducible & Ambiguous & Hensel \\ \hline
1 & 0 & 1 & 0 \\ \hline
2 & 120 & 5 & 0 \\ \hline
3 & 60 & 5 & 60 \\ \hline
4 & 120 & 5 & 0 \\ \hline
5 & 60 & 5 & 60 \\ \hline
6 & 120 & 5 & 0 \\ \hline
7 & 60 & 5 & 60 \\ \hline
\end{tabular}

\caption{Proportions of quartic irreducible polynomials, Hensel polynomials, and polynomials which are neither (listed as ambiguous) (mod $5^k$), congruent to $(x^2 + 2)^2 \pmod{5}$. All polynomials are monic with $x^3$ term of $0$, so there are three coefficients which vary. Each level $k$ shows 125 polynomials for each neither polynomial (mod $5^{k-1}$), since there are 125 ways to lift this polynomial to (mod $5^k$) by adding a multiple of $5^{k-1}$ to any of its quadratic, linear, and constant coefficients.}\label{tabledoublequadratics}
\end{table}

We will prove this by showing that these polynomials all create tables that look like Table \ref{tabledoublequadratics}. Then, we can tally up the measure of all irreducible balls of polynomials using a geometric series.

\begin{proof}

We consider the reducible polynomials congruent to $f(x) \pmod{p}$ in $\ZZ_p[x]$. Since factorizations are unique in $\FF_p[x]$, any factorization of one of these polynomials in $\ZZ_p[x]$ must be congruent to $(x^2+\zeta)(x^2 + \zeta) \pmod{p}$. We can write these factorizations as $(x^2 + p\alpha x + p\beta + \zeta)(x^2 + p\delta x + p\gamma + \zeta)$. Using Corollary \ref{lemmatrace}, we only need to consider the polynomials with $x^3$ term of $0$. This gives that $\delta = -\alpha$. So we are considering all polynomials that can be produced by factorizations of the form \[f^{(\alpha,\beta,\gamma)}(x) = (x^2 + p\alpha x + p\beta + \zeta)(x^2 -p \alpha x + p\gamma + \zeta), \alpha, \beta, \gamma \in \ZZ_p.\]

We now see when we can use Hensel's Lemma on this factorization to create an open ball of reducible polynomials. Let $p^k$ be the maximal power of $p$ dividing $\alpha$ and $\beta-\gamma$. This claim is proved in the Appendix \ref{appendixclaim1}.

\begin{claim}\label{claim1}
    If $p^k \mid \alpha, \beta-\gamma$, and $p^{k+1} \nmid \alpha$ or $p^{k+1} \nmid \beta-\gamma$, then all polynomials congruent to $f(x) = f^{(\alpha,\beta,\gamma)} (x) \pmod{p^{2k+3}}$ are reducible, because $f(x)$ satisfies Hensel's Lemma $\pmod{p^{2k+3}}$.
\end{claim}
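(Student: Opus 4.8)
I will prove Claim \ref{claim1} by exhibiting an explicit approximate root of $f^{(\alpha,\beta,\gamma)}(x)$ in a degree-$2$ extension of $\ZZ_p$ — namely $\ZZ_p[\sqrt{-\zeta}]$, where $\sqrt{-\zeta}$ denotes a square root of $-\zeta$ in that ring — and applying form (1) of Hensel's Lemma: if $|f(\mu)| < |f'(\mu)|^2$ for some $\mu$ in an extension of degree $< 4$, then $f$ splits (into distinct factors) in $\ZZ_p[x]$, hence is reducible, and the same holds for every polynomial congruent to $f \pmod{p^{2k+3}}$ provided the valuations $v_p(f(\mu))$ and $v_p(f'(\mu))$ are controlled by the coefficients mod $p^{2k+3}$. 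The natural candidate is $\mu = i + $ (small correction), where $i := \sqrt{-\zeta}$; more precisely I would take the root of the first quadratic factor $x^2 + p\alpha x + p\beta + \zeta$, which is $\mu = -\tfrac{p\alpha}{2} + \sqrt{\tfrac{p^2\alpha^2}{4} - p\beta - \zeta}$, and observe that modulo higher powers of $p$ this is a well-defined element of $\ZZ_p[i]$ since $-\zeta$ is a unit that is a square there.

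**Key steps in order.** First I would compute $f^{(\alpha,\beta,\gamma)}(\mu)$: since $\mu$ is an exact root of $g_1(x) := x^2 + p\alpha x + p\beta + \zeta$, we get $f(\mu) = g_1(\mu)g_2(\mu) = 0$ for the exact factorization; the point is that when we perturb $f$ by a multiple of $p^{2k+3}$, or equivalently when we only know the coefficients mod $p^{2k+3}$, the value $f(\mu)$ has valuation $\ge 2k+3$ (after checking $\mu \in \ZZ_p[i]$ genuinely, or taking $\mu$ to be the honest root of the nearby honest factorization). Second, and this is the crux, I would estimate $v_p(f'(\mu))$. Writing $f = g_1 g_2$ we have $f'(\mu) = g_1'(\mu)g_2(\mu) + g_1(\mu)g_2'(\mu) = g_1'(\mu)g_2(\mu)$ when $g_1(\mu)=0$. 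Now $g_1'(\mu) = 2\mu + p\alpha = 2\sqrt{\tfrac{p^2\alpha^2}{4}-p\beta-\zeta}$, which is a unit (its square is $\approx -4\zeta$, a unit since $p \ne 2$ and $\zeta$ is a unit). And $g_2(\mu) = g_2(\mu) - g_1(\mu) = (g_2 - g_1)(\mu) = -2p\alpha\mu + p(\gamma - \beta)$; since $p^k \| \alpha$ or $p^k \| (\beta - \gamma)$ and both are divisible by $p^k$, and $\mu$ is a unit (as $\mu^2 \approx -\zeta$ is a unit), we get $v_p(g_2(\mu)) = k+1$ exactly. Hence $v_p(f'(\mu)) = k+1$, so $v_p(f'(\mu)^2) = 2k+2$. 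Therefore the Hensel condition $v_p(f(\mu)) > 2k+2$ is exactly $v_p(f(\mu)) \ge 2k+3$, which holds as soon as we know $f$ mod $p^{2k+3}$ — because changing $f$ by $p^{2k+3}h(x)$ changes $f(\mu)$ by $p^{2k+3}h(\mu) \in p^{2k+3}\ZZ_p[i]$, leaving the strict inequality intact. This is what lets the conclusion propagate to the whole residue ball mod $p^{2k+3}$.

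**Main obstacle.** The delicate point is not the arithmetic of valuations but the bookkeeping around the extension ring and the exact-root-versus-approximate-root issue: I need $\mu$ to live in a fixed extension of $\ZZ_p$ of degree $< 4$ (degree $2$ here, $\ZZ_p[i]$ with $i^2 = -\zeta$), and I need to be careful that the square roots appearing in $\mu$ and in $g_1'(\mu)$ really exist in $\ZZ_p[i]$ — this uses that $\tfrac{p^2\alpha^2}{4} - p\beta - \zeta \equiv -\zeta \pmod p$ is a nonzero square in the residue field $\FF_p[i] = \FF_{p^2}$, so by Hensel lifting within $\ZZ_p[i]$ it is a square there, which is where $p \ne 2$ is essential. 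Once $\mu$ is pinned down, I would double-check that the factors $g_1, g_2$ produced are genuinely nonconstant (they are quadratic) so that "splits into distinct factors" indeed gives reducibility in $\ZZ_p[x]$, completing the proof. The remaining computations — expanding $(g_2 - g_1)(\mu)$ and reading off its valuation from the hypothesis $p^{k} \| \alpha$ or $p^{k} \| (\beta-\gamma)$ — are routine and can be relegated to the appendix as the paper does.
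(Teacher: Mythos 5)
Your approach is essentially the paper's own (Appendix \ref{appendixclaim1}): adjoin the root $\mu$ of the first quadratic factor, living in the unramified quadratic extension (the paper calls it $y$ and adjoins it directly, sidestepping your quadratic-formula/square-root-existence digression); factor $f'(\mu)=g_1'(\mu)\,g_2(\mu)$ with $g_1'(\mu)$ a unit and $g_2(\mu)=(g_2-g_1)(\mu)=-2p\alpha\mu+p(\gamma-\beta)$; conclude $v_p(f'(\mu))=k+1$ and invoke form (1) of Hensel's Lemma, noting that perturbing $f$ by multiples of $p^{2k+3}$ changes neither $v_p(g(\mu))\geq 2k+3$ nor $v_p(g'(\mu))=k+1$.

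There is, however, a genuine gap at the step you yourself identify as the crux. Writing $-2\alpha=p^kA$ and $\gamma-\beta=p^kB$ with at least one of $A,B$ a unit, you need $v_p(A\mu+B)=0$, and your only justification is that $\mu$ is a unit. That does not suffice: when \emph{both} $\alpha$ and $\beta-\gamma$ have valuation exactly $k$, so that $A$ and $B$ are both units, the sum $A\mu+B$ of a unit multiple of $\mu$ and a unit can still be divisible by $p$. What rules this out is that $A\mu+B\equiv 0\pmod{p}$ would force $\mu\equiv -B/A\pmod{p}$ to lie in $\FF_p$, whereas $\mu$ reduces to a root of $x^2+\zeta$, which is irreducible over $\FF_p$, so $\mu\bmod p$ generates $\FF_{p^2}$ and cannot equal an element of $\FF_p$. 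This is exactly the contradiction argument the paper runs, and it is the only place the hypothesis that $x^2+\zeta$ is irreducible mod $p$ enters the valuation computation; your write-up never invokes it there, and without it the claimed equality $v_p(g_2(\mu))=k+1$ is unsupported (indeed false in general if $x^2+\zeta$ were reducible). Patch that one step and the proof is complete.
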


We now notice an algebraic identity which can change our factorization into a difference of squares: \[f^{(\alpha,\beta,\gamma)} (x) = (x^2 + p\alpha x + p\beta + \zeta)(x^2 - p\alpha x + p \gamma + \zeta) = \left(x^2 + p \frac{\beta + \gamma}{2}+ \zeta\right)^2 - p^2 \left(\alpha x + \frac{\beta-\gamma}{2}\right)^2.\] Let $A = \frac{\beta + \gamma}{2}$, $B = \alpha$, and $C = \frac{\beta-\gamma}{2}$. Then $A$, $B$, and $C$ are in $\ZZ_p$ and are in $1:1$ correspondence with $\alpha$, $\beta$, and $\gamma$ in $\ZZ_p$, using that $\beta = A + C$ and $\gamma = A-C$. Then we want to consider all polynomials $f_{(A,B,C)}$ of the form

\begin{equation}\label{fABC}
    f_{(A,B,C)}(x) = (x^2 + pA + \zeta)^2-p^2(Bx + C)^2, A, B, C \in \ZZ_p.
\end{equation}

\textbf{For the rest of this lemma, let $k$ represent the largest power of $p$ that divides $B$ and $C$. (We will use $j$ for arbitrary exponents.)} From Claim \ref{claim1}, $f_{(A,B,C)}(x)$ is Hensel modulo $p^{2k+3}$. Also, $f_{(A,B,C)}(x) \equiv (x^2 + pA + \zeta)^2 \pmod{p^j}$ for any $j<2k+3$, but not for $j = 2k+3$. So, the lowest power of $p$ where we will find difference between $f_{(A,B,C)}(x)$ and $f_{(A,0,0)}(x) = (x^2 + pA + \zeta)^2$ is $p^{2k+3}$.

Our idea is to count how many polynomials $f_{(A,B,C)}(x)$ there are modulo $p^j$ for any $j$. To do this, we start with a polynomial of the form $f_{(A,0,0)}(x) = (x^2 + pA + \zeta)^2 \pmod{p^{j-1}}$. When this polynomial is lifted to $\pmod{p^j}$, there are $p^3$ polynomials $g(x) \in \ZZ/(p^{j})[x]$ (monic with no $x^3$ term) that are congruent to $f_{(A,0,0)}(x) \pmod{p^{j-1}}$. There are then three possibilities that can happen: 1) $g(x) = f_{(A',0,0)}(x)$; 2) $g(x) = f_{(A',B',C')}(x)$; 3) $g(x)$ is irreducible. In 1), we must continue by lifting $g(x)$ up to $\pmod{p^{j+1}}$. In 2), by the work of the previous paragraph, $j = 2k+3$, so $g(x)$ will satisfy Hensel's lemma. So all polynomials congruent to $g(x) \pmod{p^j}$ will be reducible. Also, since $j = 2k+3$, $j$ is odd. Once we can count the number of polynomials in 1) and 2), we will then be able to count 3), the irreducible polynomials, by taking the complement. We refer to polynomials in 1) as ``Ambiguous'', in 2) as Hensel, and in 3) as irreducible.

By starting with $j = 1$, and repeating this process of lifting all Ambiguous polynomials, removing all Hensel polynomials, and counting all irreducible polynomials, we can count the number of irreducible polynomials (mod $p^j$) which are congruent to $(x^2 + \zeta)^2$, which is the goal of the lemma. 

Counting all Ambiguous polynomials (mod $p^j$) is easy: There are exactly $p^{j-1}$ of them, because there are $p^{j-1}$ choices for $pA \pmod{p^j}$, which determine $(x^2 + pA + \zeta)^2 \pmod{p^j}$. We now count Hensel polynomials.

We begin with a claim that if $f_{(A,B,C)}(x) = f_{(A',B',C')}(x)$ (mod $p^{2k+3}$), then $f_{(A,0,0)}(x) \equiv f_{(A',0,0)}(x) \pmod{p^{2k+3}}$. This reduces our problem of counting all Hensel polynomials $f_{(A,B,C)}(x)$ to counting the number of polynomials of the form $f_{(A,0,0)}(x) = (x^2 + pA + \zeta)^2$ and $p^2(Bx+C)^2$ separately, since we can treat the first and second terms of Equation (\ref{fABC}) as their own problems. This claim is proved in Appendix \ref{appendixclaim2}.

\begin{claim}\label{claim2}
    Let $(x^2 + pA + \zeta)^2 - p^2(Bx+C)^2 \equiv (x^2 + pA' + \zeta)^2 - p^2(B'x+C')^2 \pmod{p^{2k+3}}$, with $A, B, C \in \ZZ_p$, $p^k \mid B, C, B', C'$. Then $A \equiv A' \pmod{p^{2k+2}}$, so $f_{(A,0,0)}(x) \equiv f_{(A',0,0)}(x) \pmod{p^{2k+3}}$.
\end{claim}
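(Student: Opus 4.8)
The plan is to prove Claim \ref{claim2} by a direct coefficient comparison, exploiting the non-square hypothesis on $-\zeta$ at the very end. Expanding Equation (\ref{fABC}) gives
\[
f_{(A,B,C)}(x) = x^4 + (2pA + 2\zeta - p^2B^2)x^2 - 2p^2BC\,x + (p^2A^2 + 2pA\zeta + \zeta^2 - p^2C^2),
\]
and likewise for $f_{(A',B',C')}$. Since $p^k \mid B,C,B',C'$ by hypothesis, write $B = p^k b$, $C = p^k c$, $B' = p^k b'$, $C' = p^k c'$ with $b,c,b',c' \in \ZZ_p$, and recall $\zeta$ is a unit because $-\zeta$ is a nonzero non-square. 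I will extract one linear relation on $A - A'$ from each of the three lower coefficients of the congruence $f_{(A,B,C)}(x) \equiv f_{(A',B',C')}(x) \pmod{p^{2k+3}}$, and then combine them.

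First I would compare the $x^2$-coefficients: $2p(A-A') \equiv p^{2k+2}(b^2-b'^2) \pmod{p^{2k+3}}$, so (since $p$ is odd, $2$ is a unit) $A-A' \equiv \tfrac12 p^{2k+1}(b^2-b'^2)\pmod{p^{2k+2}}$; in particular $v_p(A-A')\geq 2k+1$. Plugging this bound into the constant-term comparison kills the term $p^2(A^2-A'^2)$ modulo $p^{2k+3}$ (it has valuation $\geq 2k+3$), leaving $2p\zeta(A-A') \equiv p^{2k+2}(c^2-c'^2)\pmod{p^{2k+3}}$, hence $A-A' \equiv \tfrac{1}{2\zeta}p^{2k+1}(c^2-c'^2)\pmod{p^{2k+2}}$. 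The $x$-coefficient comparison gives $bc \equiv b'c' \pmod p$. Equating the two expressions for $A-A'$ and reducing mod $p$ yields $\zeta(b^2-b'^2) \equiv c^2-c'^2 \pmod p$.

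At this point the claim reduces to an elementary fact in $\FF_p$: if $\bar b\bar c = \bar b'\bar c'$, $\zeta(\bar b^2-\bar b'^2) = \bar c^2-\bar c'^2$, and $-\zeta$ is a non-square, then $\bar b^2 = \bar b'^2$. I would argue by cases. If $\bar b = 0$, then $\bar b'\bar c' = 0$; were $\bar b' \neq 0$ the second relation would force $\bar c'^2 = -\zeta\bar b'^2$, i.e. $(\bar c'/\bar b')^2 = -\zeta$, impossible, so $\bar b' = 0$ and $\bar b^2 = \bar b'^2 = 0$. If $\bar b \neq 0$, substitute $\bar c = \bar b'\bar c'/\bar b$ into the second relation and clear denominators to factor it as $(\bar b^2-\bar b'^2)(\zeta\bar b^2 + \bar c'^2) = 0$; the second factor cannot vanish since $(\bar c'/\bar b)^2 = -\zeta$ is impossible, so $\bar b^2 = \bar b'^2$. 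Either way $b^2 \equiv b'^2 \pmod p$, so the $x^2$-relation gives $A \equiv A' \pmod{p^{2k+2}}$, and then $f_{(A,0,0)}(x) - f_{(A',0,0)}(x) = p(A-A')\bigl(2x^2 + 2\zeta + p(A+A')\bigr)$ has all coefficients of valuation $\geq 2k+3$, which is exactly the claim.

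The main obstacle — and the only real content — is that no single lower coefficient by itself yields better than $A \equiv A' \pmod{p^{2k+1}}$; one must use all three coefficient relations simultaneously, together with the hypothesis that $-\zeta$ is a non-square mod $p$, to push the congruence up to the needed modulus $p^{2k+2}$. The rest is routine $p$-adic bookkeeping about valuations.
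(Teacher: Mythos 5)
Your proposal is correct, and its skeleton is the same as the paper's: expand both sides, substitute $B=p^kb$, $C=p^kc$, etc., and read off the three congruences $2p(A-A')\equiv p^{2k+2}(b^2-b'^2)$, $bc\equiv b'c'\pmod p$, and (after noting $p^{2k+1}\mid A-A'$ kills $p^2(A^2-A'^2)$) $2p\zeta(A-A')\equiv p^{2k+2}(c^2-c'^2)$, which combine to $\zeta(b^2-b'^2)\equiv c^2-c'^2\pmod p$. Where you diverge is the endgame. The paper adjoins $\chi$ with $\chi^2=-\zeta$ and rewrites the two relations as $(c+\chi b)^2=(c'+\chi b')^2$ in $\FF_p[\chi]\cong\FF_{p^2}$, concluding $(b,c)=\pm(b',c')$; you stay inside $\FF_p$ and, after the substitution $\bar c=\bar b'\bar c'/\bar b$, factor the relation as $(\bar b^2-\bar b'^2)(\zeta\bar b^2+\bar c'^2)=0$ and rule out the second factor by non-squareness of $-\zeta$. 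Your route is more elementary and proves exactly what the claim needs ($b^2\equiv b'^2\pmod p$, hence $A\equiv A'\pmod{p^{2k+2}}$), whereas the paper's field-extension identity is slicker and yields the stronger conclusion that the pairs agree up to sign. One small slip in your $\bar b=0$ case: with $\bar b=0$ and $\bar b'\neq 0$ you get $\bar c'=0$ from $\bar b'\bar c'=0$, and the second relation then forces $\bar c^2=-\zeta\bar b'^2$ (not $\bar c'^2=-\zeta\bar b'^2$); the contradiction with $-\zeta$ being a non-square (and with $\zeta\neq 0$ if $\bar c=0$) goes through unchanged.
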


Modulo $p^{2k+3}$, there are $p^{2k+2}$ Ambiguous polynomials $f_{(A,0,0)}(x)$. We now count polynomials of the form $p^2(Bx+C)^2 \pmod{p^{2k+3}}$.

Set $B = bp^k$, $C = cp^k$, with $b,c \in \ZZ_p$. Then $p^2(Bx+C)^2 = p^{2k+2} (bx+c)^2$. Since we are looking modulo $p^{2k+3}$, consider $b$ and $c$ modulo $p$, in $\FF_p$. By looking at quadratic, linear, and constant terms, $(bx+c)^2 = (b'x + c')^2$ if and only if $b' = \pm b$ and $c' = \pm c$, and these two signs are the same. So the only case where the map from $b$ and $c$ in $\FF_p$ to $(bx+c)^2$ is not injective is where $(bx+c)^2 = (-bx-c)^2$. The only case where this is not a duplication is when $b = c = 0$. 

This gives us $(p^2-1)/2$ nonzero values of $p^2(Bx+C)^2$ (mod $p^{2k+3}$), all of which have $p^{k+1} \nmid B$ or $p^{k+1} \nmid C$. Therefore, for each polynomial $f_{(A,0,0)}(x)$ (mod $p^{2k+3}$), there are $(p^2-1)/2$ Hensel polynomials of the form $f_{(A,B,C)}(x)$ modulo $p^{2k+3}$, by varying $B$ and $C$. In other words, we are partitioning the polynomials of the form $f_{(A,B,C)}(x)$ modulo $p^{2k+3}$ based on possible values of the first term $f_{(A,0,0)}(x) = (x^2 + pA + \zeta)^2$, and the second term $p^2(Bx+C)^2$ of Equation (\ref{fABC}).

We can now calculate the proportion of monic polynomials in $\ZZ_p[x]$ congruent to $(x^2 + \zeta)^2 \pmod{p}$ which are irreducible, $I$. Consider the $p^3$ polynomials formed by lifting an Ambiguous polynomial $m(x) = f_{(A,0,0)}(x)$ from modulo $p^{j-1}$ to modulo $p^{j}$. There are $p$ Ambiguous polynomials $f_{(A',0,0)}(x)$ modulo $p^{j}$ which are congruent to $m(x)$ modulo $p^{j-1}$. If $j$ is even, the remaining $p^3-p$ polynomials are all irreducible, since polynomials can only become Hensel for odd $j$, since they become Hensel when $j = 2k+3$. If $j$ is odd, half of the remaining $p^3-p$ polynomials are irreducible, while the other half are reducible and Hensel. This is because each of the $p$ Ambiguous polynomials $f_{(A',0,0)}(x)$ modulo $p^{j}$ leads to $(p^2-1)/2$ Hensel polynomials $f_{(A',B',C')}(x)$ modulo $p^{j}$ by varying $B'$ and $C'$. This is shown in Table \ref{tabledoublequadratics}. Each irreducible mod $p^{j}$ creates an irreducible ball of measure $p^{-3j}$ in $\ZZ_p$. The measure of monic polynomials in $\ZZ_p[x]$ congruent to $(x^2 + \zeta)^2 \pmod{p}$ for a specific $\zeta$ is $p^{-3}$. Dividing out by this, each irreducible modulo $p^{j}$ contributes $p^{-3j+3}$ to $I$. 

This gives us an infinite series for $I$, where we start with the $p^{j-2}$ polynomials of the form $f_{(A,0,0)}(x) = (x^2 + pA + \zeta)^2$ (mod $p^{j-1}$), and lift them each to $p^3-p$ irreducible polynomials (mod $p^{j}$) for $j$ even, and $(p^3-p)/2$ irreducible polynomials (mod $p^{j}$) for $j$ odd.

\begin{align*}
    I &= \sum_{j \geq 2 \; \text{even}} p^{j-2} (p^3 - p) p^{-3j+3} +  \sum_{j \geq 2 \; \text{odd}} p^{j-2} \frac{p^3 - p}{2} p^{-3j+3}\\
    &= \sum_{j \geq 2 \; \text{even}} p^{-2j} p^2(p^2 - 1) +  \sum_{j \geq 2 \; \text{odd}} p^{-2j} \frac{p^2(p^2 - 1)}{2} \\
    &= p^2(p^2 - 1) p^{-4} + \frac{p^2(p^2 - 1)}{2} p^{-6} + p^2(p^2 - 1)p^{-8} + \frac{p^2(p^2 - 1)}{2} p^{-10} + \cdots\\
    &= (p^2-1)\frac{2p^2 + 1}{2} p^{-4} \sum_{j=0}^{\infty} p^{-4j} \\
    &= (p^2-1)\frac{2p^2 + 1}{2} \frac{p^{-4}}{1-p^{-4}} \\
    &= \frac{2p^2+1}{2(p^2+1)}.
\end{align*}
\end{proof}

\begin{lemma}\label{lemmaSlope0.5}

Let $p$ prime, $p \neq 2$. Let $S$ be the set of quartic polynomials with a Newton Polygon of one line from $(0,2)$ to $(4,0)$, and no $x^3$ term. In other words $f(x) = x^4 + c_2 x^2 + c_1 x + c_0$ is in $S$ if $v_p(c_0) = 2$, $v_p(c_1) \geq 2$, and $v_p(c_2) \geq 1$. Let the proportion of monic $4^{th}$ degree polynomials in $\ZZ_p[x]$ in $S$ which are irreducible be $I$. Then \[I = \frac{p^2+3p+1}{2p(p + 1)} = \frac{p+2}{2(p + 1)} + \frac{1}{2p}. \]

\end{lemma}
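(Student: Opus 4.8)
The plan is to mirror the structure of Lemma~\ref{lemmaDoubleQUadratics}, but to replace its level-by-level lifting bookkeeping with a single $p$-adic change-of-variables computation. First I would pin down what a reducible $f\in S$ looks like. Since the Newton polygon of $f$ is the single segment from $(0,2)$ to $(4,0)$, every root of $f$ has valuation $1/2$, so $f$ has no linear factor over $\QQ_p$; a reducible $f$ must therefore split as a product of two monic quadratics, and each factor again has Newton polygon the segment from $(0,1)$ to $(2,0)$, i.e.\ constant term of valuation exactly $1$ and linear term of valuation $\ge 1$. Because $f$ has no $x^3$ term, the two quadratic factors must be $x^2-pB_1x+p(A_1-C_1)$ and $x^2+pB_1x+p(A_1+C_1)$, whose product (exactly as in Lemma~\ref{lemmaDoubleQUadratics}) is the difference of squares
\[ \psi(A_1,B_1,C_1):=(x^2+pA_1)^2-(pB_1x+pC_1)^2, \]
and the requirement $v_p(c_0)=2$ translates precisely into $A_1+C_1,\ A_1-C_1\in\ZZ_p^\times$. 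So $\psi$ is a map from $D:=\{(A_1,B_1,C_1)\in\ZZ_p^3: A_1\pm C_1\in\ZZ_p^\times\}$ onto the set of reducible polynomials in $S$.

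Next I would check that $\psi$ is surjective onto the reducibles and generically $2$-to-$1$: the only freedom in recovering $(A_1,B_1,C_1)$ from such an $f$ is the labelling of its two quadratic factors, and this unordered pair is genuinely unique exactly because $f$ has no linear factor. Swapping the factors is the involution $(A_1,B_1,C_1)\mapsto(A_1,-B_1,-C_1)$, which is measure-preserving, preserves $D$, and has fixed locus $\{B_1=C_1=0\}$ of measure zero. Then I would compute the Jacobian of $\psi$ viewed as a map $(A_1,B_1,C_1)\mapsto(c_2,c_1,c_0)\in\ZZ_p^3$; a direct determinant computation gives $\det J_\psi=-8p^5(pA_1B_1^2+C_1^2)$, where $p\ne 2$ is used so that $8$ is a unit. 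A short parity argument (the two terms $pA_1B_1^2$ and $C_1^2$ have valuations of opposite parity when $A_1$ is a unit, and $C_1$ is forced to be a unit when $A_1$ is not) then yields
\[ v_p(\det J_\psi)=\begin{cases} 5+\min\!\bigl(1+2v_p(B_1),\,2v_p(C_1)\bigr) & A_1\in\ZZ_p^\times,\\[2pt] 5 & A_1\notin\ZZ_p^\times. \end{cases} \]

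Finally I would apply the $p$-adic change-of-variables formula; accounting for the $2$-to-$1$ behaviour via the measure-preserving involution gives
\[ \mu\bigl(\{\text{reducible } f\in S\}\bigr)=\tfrac12\int_D |\det J_\psi|_p\, d(A_1,B_1,C_1). \]
Splitting the integral according to whether $A_1$ is a unit (and, within the unit case, according to the two residues of $C_1$ excluded by the conditions $A_1\pm C_1\in\ZZ_p^\times$) and summing the resulting geometric series in $v_p(B_1)$ and $v_p(C_1)$ yields $\mu(\{\text{reducible } f\in S\})=\frac{(p-1)(p^2-p-1)}{2p^7(p+1)}$. Since $\mu(S)=\frac{p-1}{p^6}$ (the region $v_p(c_2)\ge 1$, $v_p(c_1)\ge 2$, $v_p(c_0)=2$ inside $\ZZ_p^3$), dividing gives $I=1-\frac{p^2-p-1}{2p(p+1)}=\frac{p^2+3p+1}{2p(p+1)}$, which is the claimed value. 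The main obstacle is the middle step: correctly identifying the domain $D$ and proving $\psi$ is exactly $2$-to-$1$ onto the reducibles (this rests on the ``no linear factor'' observation and uniqueness of the quadratic factorization), together with the case split in the valuation of the Jacobian. An alternative staying closer to the paper's own method in Lemma~\ref{lemmaDoubleQUadratics} would be to lift $f$ modulo $p^j$ step by step, prove a Hensel-threshold claim (``if $p^k$ exactly divides $B_1$ and $C_1$ then $f$ is Hensel modulo $p^{2k+c}$''), tally ambiguous versus Hensel versus irreducible residues at each level as in Table~\ref{tabledoublequadratics}, and sum the resulting series; but the change-of-variables route avoids that casework.
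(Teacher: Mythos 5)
Your proposal is correct and arrives at the stated value of $I$, but by a genuinely different route from the paper. You share the paper's starting point — the Newton polygon rules out linear factors, any reducible $f\in S$ is a product of two quadratics of the form $(x^2\pm p\alpha x+p\beta)$, and the difference-of-squares parametrization $(x^2+pA)^2-p^2(Bx+C)^2$ with $A\pm C\in\ZZ_p^\times$ — but from there the paper proceeds by elementary level-by-level lifting: it establishes Hensel thresholds in two cases (Claim~\ref{claim3}), painstakingly counts the distinct residues of $(x^2+pA)^2$ and $p^2(Bx+C)^2$ modulo $p^{2k+3}$, $p^{2k+4}$, $p^{2k+5}$ while controlling collisions between different parameter values (Claims~\ref{claim4}--\ref{claim6}), and sums a geometric series. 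You replace all of that with a single $p$-adic change-of-variables computation for the parametrizing map $\psi$, which I have checked: $\det J_\psi=8p^5(C^2+pAB^2)$ (up to sign), your valuation case split is right (when $A$ is a unit the two terms have valuations of opposite parity; when $p\mid A$ the constraint forces $C\in\ZZ_p^\times$), $\psi$ is exactly $2$-to-$1$ onto the reducible locus away from the measure-zero set $B=C=0$ by uniqueness of the factorization into irreducible quadratics, and the resulting integral does evaluate to $\frac{(p-1)(p^2-p-1)}{2p^7(p+1)}$, giving $I=\frac{p^2+3p+1}{2p(p+1)}$. What your approach buys is the elimination of the most delicate part of the paper's argument, the residue-collision claims and the Case 1/Case 2 Hensel bookkeeping; indeed your $\min\bigl(1+2v_p(B),2v_p(C)\bigr)$ is precisely the quantity whose two parities correspond to the paper's Cases 1 and 2, so the Jacobian valuation encodes the Hensel thresholds automatically. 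The cost is reliance on the $p$-adic change-of-variables formula for a generically finite-to-one analytic map (standard, but a heavier tool than the paper's deliberately elementary residue counting), and you should make explicit the decomposition of the domain into a measurable fundamental domain for the involution $(A,B,C)\mapsto(A,-B,-C)$ on which $\psi$ is globally injective before invoking that formula.
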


Notice the similarity between the number in this result and the value of $I$ for quadratic polynomials from Theorem \ref{theoremprimes}, $I = (p+2)/2(p+1)$. 

\begin{proof}
Polynomials in $S$ cannot have any linear factors, since the only slope of their Newton Polygon is $-1/2$ (\cite{gouvea} \S 7.4). If they factor, they must factor as the product of two quadratics, each of the form $(x^2 + pc_1 x + pc_0)$, with $p \nmid c_0$, so that the slope of the Newton Polygon of these quadratics is $-1/2$. By looking at $x^3$ terms, the only way that polynomials in $S$ will factor is as \[f^{(\alpha,\beta,\gamma)}(x) = (x^2 + p \alpha x + p \beta)(x^2 - p \alpha x + p \gamma), p\nmid \beta, p \nmid \gamma.\]

We will now see when we can use Hensel's Lemma on this factorization to create an open ball of reducible polynomials. We split into two cases. Let $p^k$ be the maximal power of $p$ dividing $\alpha$ and $\beta-\gamma$. We have two cases, depending on the valuation of $\beta-\gamma$. In Case 1, $p^{k+1} \nmid \beta-\gamma$. In Case 2, $p^{k+1} \mid \beta-\gamma$, and $p^{k+1} \nmid \alpha$. We will use these throughout the rest of the lemma. This claim is proved in Appendix \ref{appendixclaim3}.

\begin{claim}\label{claim3}
     In Case 1, all polynomials congruent to $f(x) = f^{(\alpha,\beta,\gamma)} (x) \pmod{p^{2k+4}}$ are reducible, because $f(x)$ satisfies Hensel's Lemma $\pmod{p^{2k+4}}$. In Case 2, all polynomials congruent to $f(x) = f^{(\alpha,\beta,\gamma)} (x) \pmod{p^{2k+5}}$ are reducible, because $f(x)$ satisfies Hensel's Lemma $\pmod{p^{2k+5}}$.
\end{claim}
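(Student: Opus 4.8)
The plan is to verify form (1) of Hensel's Lemma for \emph{every} polynomial $\tilde f(x)\in\ZZ_p[x]$ congruent to $f(x)=f^{(\alpha,\beta,\gamma)}(x)$ modulo the claimed power of $p$, using a single fixed test point: a root $r$ of the quadratic factor $g(x)=x^2+p\alpha x+p\beta$. Since $p\nmid\beta$, the Newton polygon of $g$ is the single segment from $(0,1)$ to $(2,0)$, so $g$ is irreducible over $\QQ_p$ and $r$ lies in a (ramified) quadratic extension of $\QQ_p$; this has degree $2<4$, which is what Hensel's Lemma (1) permits. Let $w$ denote the valuation on $\QQ_p(r)$ normalized so that $w(p)=1$; then $w(r)=1/2$. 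Writing $f=gh$ with $h(x)=x^2-p\alpha x+p\gamma$, we have $f(r)=0$ and $f'(r)=g'(r)h(r)+g(r)h'(r)=g'(r)h(r)$.

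First I would compute $w(f'(r))$. For $g'(r)=2r+p\alpha$: the term $2r$ has valuation $1/2$ (here $p$ odd is used, so $w(2)=0$), while $w(p\alpha)=1+v_p(\alpha)\ge 1>1/2$, so $w(g'(r))=1/2$. For $h(r)$ the key manipulation is $h(r)=h(r)-g(r)=-2p\alpha r+p(\gamma-\beta)$, so $w(h(r))=1+w\bigl(-2\alpha r+(\gamma-\beta)\bigr)$, and the two competing valuations inside are $w(2\alpha r)=v_p(\alpha)+\tfrac12$ and $w(\gamma-\beta)=v_p(\beta-\gamma)$. In Case 1 we have $v_p(\beta-\gamma)=k$ while $v_p(\alpha)\ge k$, so the constant term dominates, $w(h(r))=k+1$, and $w(f'(r))=\tfrac12+(k+1)=k+\tfrac32$. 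In Case 2 we have $v_p(\alpha)=k$ and $v_p(\beta-\gamma)\ge k+1$, so the $r$-term dominates, $w(h(r))=k+\tfrac32$, and $w(f'(r))=\tfrac12+(k+\tfrac32)=k+2$.

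Next I would control the perturbation. If $\tilde f\equiv f\pmod{p^m}$, then every coefficient of $\tilde f-f$ has valuation $\ge m$; since $w(r)\ge 0$ this gives $w(\tilde f(r))=w\bigl((\tilde f-f)(r)\bigr)\ge m$ and likewise $w\bigl((\tilde f-f)'(r)\bigr)\ge m$, so $w(\tilde f'(r))=w(f'(r))$ as soon as $m>w(f'(r))$. Hensel's Lemma (1) applies to $\tilde f$ at $r$ as soon as $w(\tilde f(r))>2w(\tilde f'(r))$, and since $w(\tilde f(r))\ge m$ it is enough that $m>2w(f'(r))$. In Case 1 this reads $m>2k+3$, so $m=2k+4$ works (and indeed $2k+4>k+\tfrac32$); in Case 2 it reads $m>2k+4$, so $m=2k+5$ works (and $2k+5>k+2$). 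In each case $\tilde f(x)$ is therefore reducible, which is exactly the claim.

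The computations are routine; the only genuine care is in tracking the half-integer valuations coming from the ramified quadratic extension and correctly deciding which of $-2\alpha r$ and $\gamma-\beta$ dominates — which is precisely the distinction between Case 1 and Case 2, and the step most likely to produce an off-by-one error in the exponent $m$. A minor point worth stating explicitly is that Hensel's Lemma (1) produces a root of $\tilde f$ in $\QQ_p(r)$, whose minimal polynomial over $\QQ_p$ (of degree $1$ or $2$) divides $\tilde f$, so that $\tilde f$ really does factor nontrivially in $\ZZ_p[x]$.
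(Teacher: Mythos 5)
Your proof is correct and follows essentially the same route as the paper's: both introduce a root $y$ of $x^2+p\alpha x+p\beta$ in the ramified quadratic extension, compute $v_p(f'(y))=\tfrac12+1+v_p(-2\alpha y+(\gamma-\beta))$, obtain $k+\tfrac32$ in Case 1 and $k+2$ in Case 2, and conclude via the inequality $|g(y)|<|g'(y)|^2$ for any lift $g\equiv f\pmod{p^m}$ with $m>2v_p(f'(y))$. Your explicit check that $w(\tilde f'(r))=w(f'(r))$ under perturbation, and the closing remark about the minimal polynomial of the Henselian root dividing $\tilde f$, are small points the paper leaves implicit but add nothing essentially new.
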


We now notice an algebraic identity which can change our factorization into a difference of squares: \[f^{(\alpha,\beta,\gamma)} (x) = (x^2 + p\alpha x + p\beta)(x^2 - p\alpha x + p \gamma) = \left(x^2 + p \frac{\beta + \gamma}{2}\right)^2 - p^2 \left(\alpha x + \frac{\beta-\gamma}{2}\right)^2.\] Let $A = \frac{\beta + \gamma}{2}$, $B = \alpha$, and $C = \frac{\beta-\gamma}{2}$. Then $A$, $B$, and $C$ are in $\ZZ_p$ and are in $1:1$ correspondence with $\alpha$, $\beta$, and $\gamma$ in $\ZZ_p$, using that $\beta = A + C$ and $\gamma = A-C$. The constant term of $f^{(\alpha,\beta,\gamma)} (x)$ is $p^2(A^2 - C^2)$, which we require not to be divisible by $p^3$. Then we want to consider all polynomials $f_{(A,B,C)}$ of the form

\begin{equation}\label{eqfABC}
f_{(A,B,C)}(x) = (x^2 + pA)^2-p^2(Bx + C)^2 \quad A, B, C \in \ZZ_p, p \nmid A^2 - C^2.
\end{equation}

Let $k$ be the maximal integer such that $p^k \mid B$ and $p^k\mid C$. Consider the sets \begin{align}
    \text{Case 1: } T &= \{f_{(A,B,C)}(x) \in \ZZ_p[x] : p^k \mid B,C\text{, and } p^{k+1} \nmid C\} \text{ and } \label{Case1def}\\
    \text{Case 2: } U &= \{f_{(A,B,C)}(x) \in \ZZ_p[x] : p^k \mid B, p^{k+1} \nmid B\text{, and } p^{k+1} \mid  C \}.\label{Case2def}
\end{align} The sets $T$ and $U$ represent cases 1 and 2, respectively. From Claim \ref{claim3}, polynomials in $T$ are Hensel modulo $p^{2k+4}$ and polynomials in $U$ are Hensel modulo $p^{2k+5}$. Now we consider all factorizations of these forms. For any $j<2k+3$, $f_{(A,B,C)}(x) \equiv (x^2 + pA)^2 \pmod{p^j}$, but not necessarily for $j = 2k+3$. So, the lowest power of $p$ where we will find a difference between $f_{(A,B,C)}(x)$ and $f_{(A,0,0)}(x) = (x^2 + pA)^2$ is modulo $p^{2k+3}$. 

We will calculate how many polynomials there are that are reducible via Case 1 modulo $p^{2k+3}$ and $p^{2k+4}$, where they will be Hensel. We will then calculate how many polynomials there are that are reducible via Case 2 modulo $p^{2k+3}$, $p^{2k+4}$, and $p^{2k+5}$ where they will be Hensel. After we complete this, we will find there to be a geometric series for the measure of all irreducible polynomials modulo $p^i$ that are congruent to polynomials which are neither irreducible nor Hensel modulo $p^{i-1}$. Tallying this up will give our final result.

Case 1: We start with a claim about there being no overlap between polynomials coming from different values of $(x^2 + pA)^2$, which is proved in Appendix \ref{appendixclaim4}.

\begin{claim}\label{claim4}
    Let $f_{(A,B,C)}(x) \equiv f_{(A',B',C')}(x) \pmod{p^{2k+4}}$, $p^k \mid  B, C, B', C'$, and $p^{k+1} \nmid C, C'$. Then $A \equiv A' \pmod{p^{2k+3}}$. Further, if we only have $f_{(A,B,C)}(x) \equiv f_{(A',B',C')}(x)$ modulo $p^{2k+3}$, then $A \equiv A' \pmod{p^{2k+2}}$.
\end{claim}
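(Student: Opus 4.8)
\emph{Proof proposal for Claim~\ref{claim4}.} The plan is to translate the polynomial congruence into three scalar congruences, one per coefficient, and then bootstrap from the weakest one. Expanding (\ref{eqfABC}) gives
\[
f_{(A,B,C)}(x) = x^4 + (2pA - p^2B^2)x^2 - 2p^2BC\,x + p^2(A^2 - C^2),
\]
and since the two polynomials are monic of degree $4$ with vanishing $x^3$ term, $f_{(A,B,C)} \equiv f_{(A',B',C')} \pmod{p^{2k+4}}$ is equivalent to $2p(A-A') \equiv p^2(B^2-B'^2)$, $\ p^2BC \equiv p^2B'C'$, and $p^2(A^2-A'^2) \equiv p^2(C^2-C'^2)$, all $\pmod{p^{2k+4}}$. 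Writing $B = p^k b$, $C = p^k c$, $B' = p^k b'$, $C' = p^k c'$ with $c,c'$ units (this is where the hypothesis $p^{k+1}\nmid C,C'$ enters) and cancelling powers of $p$ (legitimate since $p\neq 2$ makes $2$ a unit), these become
\begin{align*}
A - A' &\equiv 2^{-1}p^{2k+1}(b^2-b'^2) \pmod{p^{2k+3}}, \\
bc &\equiv b'c' \pmod{p^2}, \\
A^2 - A'^2 &\equiv p^{2k}(c^2-c'^2) \pmod{p^{2k+2}},
\end{align*}
which I will call (i), (ii), (iii).

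For the first assertion, the point is that (i) already forces $v_p(A-A') \geq 2k+1$, and the desired conclusion $A \equiv A' \pmod{p^{2k+3}}$ is \emph{equivalent} to $b^2 \equiv b'^2 \pmod{p^2}$; so everything reduces to that one congruence. To get it, I would first combine (i) with (iii): since $v_p(A-A')\geq 2k+1$ we have $(A-A')^2 \equiv 0 \pmod{p^{2k+2}}$, hence $A^2-A'^2 = 2A'(A-A') + (A-A')^2 \equiv 2A'(A-A') \pmod{p^{2k+2}}$, and substituting (i) gives $A^2-A'^2 \equiv A'p^{2k+1}(b^2-b'^2) \pmod{p^{2k+2}}$; comparing with (iii) and dividing by $p^{2k}$ yields $c^2 - c'^2 \equiv pA'(b^2-b'^2) \pmod{p^2}$. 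Then I would square (ii) to get $b^2c^2 \equiv b'^2c'^2 \pmod{p^2}$, substitute $c'^2 \equiv c^2 - pA'(b^2-b'^2)$, and collect terms to obtain $(b^2-b'^2)(c^2 + pA'b'^2) \equiv 0 \pmod{p^2}$. Since $c^2 + pA'b'^2 \equiv c^2 \not\equiv 0 \pmod p$ is a unit, this forces $b^2 \equiv b'^2 \pmod{p^2}$, and then (i) gives $A \equiv A' \pmod{p^{2k+3}}$.

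For the second assertion I would rerun the identical computation with every modulus lowered by one power of $p$: from $f_{(A,B,C)} \equiv f_{(A',B',C')} \pmod{p^{2k+3}}$ one gets $A-A' \equiv 2^{-1}p^{2k+1}(b^2-b'^2)\pmod{p^{2k+2}}$, $\ bc\equiv b'c'\pmod p$, and $A^2-A'^2 \equiv p^{2k}(c^2-c'^2)\pmod{p^{2k+1}}$; since $v_p(A-A')\geq 2k+1$ already makes $A^2-A'^2\equiv 0\pmod{p^{2k+1}}$, the last relation forces $c^2\equiv c'^2\pmod p$, squaring the middle relation then gives $b^2\equiv b'^2\pmod p$, and the first relation delivers $A\equiv A'\pmod{p^{2k+2}}$. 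The hard part is exactly this bootstrapping: the $x^2$-coefficient congruence by itself only forces $A\equiv A'\pmod{p^{2k+1}}$ (the $p^2B^2$ terms are a genuine obstruction at level $p^{2k+2}$), so the last two powers of $p$ must be squeezed out of the linear and constant coefficients, crucially using that $c$ is a unit. I would also note that no case split on $k$ is needed — in particular $k=0$, where $b$ need not be a unit, causes no trouble, since the argument never uses $p\nmid A$, only $p\nmid c$.
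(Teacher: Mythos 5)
Your proof is correct, and it rests on the same skeleton as the paper's: the same three coefficient congruences after writing $B=p^kb$, $C=p^kc$, the same initial observation $v_p(A-A')\ge 2k+1$, and the same feedback loop in which the constant and linear coefficients are used to squeeze two more powers of $p$ out of the $x^2$-coefficient relation. Where you diverge is in how those two powers are extracted. The paper takes square roots: from $p\mid c^2-c'^2$ it gets $c\equiv\pm c'\pmod p$, hence $b\equiv\pm b'\pmod p$ from $bc\equiv b'c'$, passes through the intermediate stage $A\equiv A'\pmod{p^{2k+2}}$, and then upgrades to $\pmod{p^2}$ via the observation that $p^2\mid(c-c')(c+c')$ with $p\nmid 2c$ forces $p^2$ to divide a single factor. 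You stay at the level of squares throughout: combining the quadratic and constant relations (with the more careful expansion $A^2-A'^2=2A'(A-A')+(A-A')^2$) gives $c^2-c'^2\equiv pA'(b^2-b'^2)\pmod{p^2}$, and eliminating $c'^2$ from the squared linear relation yields $(b^2-b'^2)(c^2+pA'b'^2)\equiv 0\pmod{p^2}$ with unit second factor, so $p^2\mid b^2-b'^2$ in one stroke. This avoids all sign bookkeeping and the case analysis on $c\mp c'$; it still delivers the intermediate fact $p\mid c^2-c'^2$ that the paper reuses later to rule out overlap with the polynomials $(x^2+pA)^2$. Both arguments need $p\neq 2$ (yours only to invert $2$). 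Your treatment of the weaker $\pmod{p^{2k+3}}$ statement by uniformly lowering each modulus matches the paper's.
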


Modulo $p^i$, there are $p^{i-1}$ choices for a polynomial of the form $(x^2 + pA)^2$, since $A$ can be any number modulo $p^{i-1}$, and all of these create different $x^2$ terms. To satisfy Equation (\ref{eqfABC}), we require that $p \nmid A^2 - C^2$. When $p \nmid C$, this gives us $p-2$ choices for $A \pmod{p}$, since $A \not \equiv \pm C \pmod{p}$. This is the case where $k = 0$. When $k>0$, we have $p-1$ choices for $A \pmod p$, since we require that $p \nmid A$. These are the only requirements for $A$. Therefore the number of choices for $(x^2 + pA)^2 \pmod{p^{2k+3}}$ is $p(p-2)$ if $k = 0$, and $p^{2k+1}(p-1)$ if $k > 0$. The number of choices for $(x^2 + pA)^2 \pmod{p^{2k+4}}$ is $p^2(p-2)$ if $k = 0$, and $p^{2k+2}(p-1)$ if $k > 0$. 

We now calculate the number of choices for $p^2(Bx+C)^2 \pmod{p^{2k+3}}$ and $p^2(Bx+C)^2 \pmod{p^{2k+4}}$. By setting $B = bp^k$ and $C = cp^k$ with $p \nmid c$, we are equivalently calculating the number of choices for $(bx+c)^2 \pmod{p}$ and $(bx+c)^2 \pmod{p^2}$. Consider the choices $\pmod{p}$. We have $\frac{p-1}{2}$ choices for $c^2 \pmod{p}$ and $\frac{p+1}{2}$ choices for $b^2 \pmod{p}$. Each of the choices of $b^2$ except for $b^2 \equiv 0 \pmod{p}$ allows for a binary choice of the sign of $2bc$. This gives a total of $\frac{p-1}{2}\cdot 2 + 1 = p$ choices for $b^2$ and $2bc$. So, the total number of choices for $(bx+c)^2 \pmod{p}$ is $p(p-1)/2$. 

Now we consider $(bx+c)^2 \pmod{p^2}$. We have $\frac{p^2-p}{2}$ choices for $c^2 \pmod{p^2}$, since the group of $p^2-p$ units of $\ZZ/p^2\ZZ$ is cyclic. If $p \nmid b$, we have similarly $\frac{p^2-p}{2}$ choices for $b^2 \pmod{p^2}$, and a binary choice for the sign of $2bc$. If $p \mid b$, we have one choice for $b^2 \pmod{p^2}$, but we have $p$ choices for $2bc \pmod{p^2}$. This gives us $\frac{p^2-p}{2} \cdot 2 + p = p^2$ choices for $b^2$ and $2bc$. So, the total number of choices for $(bx+c)^2 \pmod{p^2}$ is $p^3 (p-1)/2$.

Using Claim \ref{claim4}, the choices of $(x^2 + pA)^2$ and $p^2(Bx+C)^2$  modulo $p^{2k+3}$ and $p^{2k+4}$ are independent. This allows us to give a total count of the polynomials reducible by Case 1, as defined by $T$ in (\ref{Case1def}):

\begin{equation}\label{case1}
|\{\text{Case 1 Reducible Polynomials mod $p^i$}\}| = \begin{cases} 
p^2(p-2)(p-1)/2 & \text{ if } i = 3 \\
p^5(p-2)(p-1)/2 & \text{ if } i = 4 \\
p^{2k+2}(p-1)^2/2 & \text{ if } i = 2k+3, k>0 \\
p^{2k+5}(p-1)^2/2 & \text{ if } i = 2k+4, k>0
\end{cases}
\end{equation}

These are all of the polynomials reducible by Case $1$. None of them are congruent to $(x^2 + pA)^2 \pmod{p^i}$, because of an intermediate result from Claim \ref{claim4} that $p\mid c^2 - c'^2$, which has no solutions if we set $c = 0$, and require that $p \nmid c'$. By Claim \ref{claim3}, all of these Case 1 reducible polynomials are Hensel modulo $p^{2k+4}$, and none are Hensel modulo $p^{2k+3}$. Notice that the number of Case 1 polynomials with $i = 2k+4$ is $p^3$ times the number with $i = 2k+3$, for $k \geq 0$. This means that all possible extensions of a Case 1 polynomial mod $p^{2k+3}$ extended to mod $p^{2k+4}$ remain reducible by Case 1, and satisfy Hensel's Lemma. So all polynomials congruent to a Case 1 polynomial mod $p^{2k+3}$ are reducible. We now move to consider Case 2, as defined by $U$ in (\ref{Case2def}).

First, we show that there are no polynomials in $U$ not congruent to a polynomial of the form  $(x^2 + pA)^2 \pmod{p^{2k+3}}$. Let $p^{k+1}\mid C$, and $p^{k} \mid  B$. Then 
\begin{align}
f_{(A,B,C)}(x) &= (x^2 + pA)^2-p^2(Bx + C)^2 & \nonumber\\
&\equiv x^4 + 2pA x^2 + p^2A^2 - p^2B^2x^2 &\pmod{p^{2k+3}} \nonumber\\
&\equiv \left(x^2 + p\left(A - \frac{pB^2}{2}\right)\right)^2 &\pmod{p^{2k+3}} \label{congruency}
\end{align}

This implies that all Case 2 polynomials $f_{(A,B,C)}(x)$ are congruent to a polynomial of the form  $(x^2 + pA)^2 \pmod{p^{2k+3}}$. Now we consider $\pmod{p^{2k+4}}$ and $\pmod{p^{2k+5}}$. We will have some overlap between polynomials coming from different values of $pA$  $\pmod{p^{2k+4}}$ or $\pmod{p^{2k+5}}$, meaning that $f_{(A,B,C)}(x) \equiv f_{(A',B',C')}(x)$, but $f_{(A,0,0)}(x) \not \equiv f_{(A',0,0)}(x)$. However, this overlap between different values of $A$ can be quantified. Every polynomial of the form $f_{(A,B,C)}(x)$ satisfies $f_{(A,B,C)}(x) \equiv f_{(A',B',C')}(x)$ $\pmod{p^{2k+4}}$ or $\pmod{p^{2k+5}}$ with $p$ different values of $f_{(A',0,0)}(x)$. The following claims, with Claim \ref{claim5} about $\pmod{p^{2k+4}}$ and Claim \ref{claim6} about $\pmod{p^{2k+5}}$, each have two parts and are proved in Appendix \ref{appendixclaim6}.

\begin{claim}\label{claim5}
    1) Let $f_{(A,B,C)}(x) \equiv f_{(A',B',C')}(x) \pmod{p^{2k+4}}$, $p^k \mid  B, B'$, $p^{k+1} \nmid B, B'$, and $p^{k+1} \mid C, C'$. Then $A \equiv A' \pmod{p^{2k+2}}$. 
    
    2) Now fix $A, B, C,$ and $A'$ such that $A \equiv A' \pmod{p^{2k+2}}$. Then there is exactly one possible value of $p^2(B'x + C')^2 \pmod{p^{2k+4}}$ such that $f_{(A,B,C)}(x) \equiv f_{(A',B',C')}(x) \pmod{p^{2k+4}}$. 
\end{claim}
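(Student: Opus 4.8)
The plan is to pass to coordinates. Writing $f_{(A,B,C)}(x) = x^4 + (2pA - p^2B^2)x^2 - 2p^2BC\,x + (p^2A^2 - p^2C^2)$, I parametrize the Case~2 data by $B = bp^k$ with $p \nmid b$ and $C = cp^{k+1}$ with $c \in \ZZ_p$, and similarly $B' = b'p^k$, $C' = c'p^{k+1}$. Note that in Case~2 we always have $p \mid C$, so the hypothesis $p \nmid A^2 - C^2$ from \eqref{eqfABC} forces $p \nmid A$ (and likewise $p \nmid A'$), a fact that will be used repeatedly.

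For part~1 I would compare the constant and $x^2$ coefficients of $f_{(A,B,C)}$ and $f_{(A',B',C')}$ modulo $p^{2k+4}$. The constant terms give $p^2A^2 \equiv p^2(A')^2 \pmod{p^{2k+4}}$, i.e.\ $A^2 \equiv (A')^2 \pmod{p^{2k+2}}$; the $x^2$ coefficients give $2pA - p^{2k+2}b^2 \equiv 2pA' - p^{2k+2}(b')^2 \pmod{p^{2k+4}}$, which after dividing by $p$ and using $p \neq 2$ forces $p^{2k+1} \mid A - A'$, hence $A \equiv A' \pmod p$. Were it also true that $A \equiv -A' \pmod p$, we would get $2A \equiv 0 \pmod p$, contradicting $p \nmid A$; so $p \nmid A + A'$. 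Dividing $A^2 - (A')^2 = (A-A')(A+A') \equiv 0 \pmod{p^{2k+2}}$ by the unit $A + A'$ yields $A \equiv A' \pmod{p^{2k+2}}$, as claimed.

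For part~2, rewrite $f_{(A,B,C)}(x) \equiv f_{(A',B',C')}(x) \pmod{p^{2k+4}}$ as
\[ p^2(B'x+C')^2 \equiv p^2(Bx+C)^2 - \left[(x^2+pA)^2 - (x^2+pA')^2\right] \pmod{p^{2k+4}}. \]
The right-hand side depends only on $A,B,C,A'$, so at most one value of $p^2(B'x+C')^2$ modulo $p^{2k+4}$ can occur; it remains to exhibit an admissible $(B',C')$ realizing it. Writing $A - A' = p^{2k+2}t$, one computes $(x^2+pA)^2 - (x^2+pA')^2 = p(A-A')\left(2x^2 + p(A+A')\right) \equiv 2p^{2k+3}t\,x^2 \pmod{p^{2k+4}}$, so the target polynomial is $(p^{2k+2}b^2 - 2p^{2k+3}t)x^2 + 2p^{2k+3}bc\,x$ modulo $p^{2k+4}$. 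Matching this against $p^2(B'x+C')^2 \equiv p^{2k+2}(b')^2x^2 + 2p^{2k+3}b'c'\,x \pmod{p^{2k+4}}$ reduces to solving $(b')^2 \equiv b^2 - 2pt \pmod{p^2}$ together with $b'c' \equiv bc \pmod p$; since $b^2 - 2pt$ is a unit square modulo $p$, Hensel's Lemma produces $b'$ with $p \nmid b'$ solving the first congruence, and then $c' \equiv bc(b')^{-1} \pmod p$ solves the second. The resulting pair $(B',C') = (b'p^k, c'p^{k+1})$ lies in Case~2 with the same $k$, and $p \nmid (A')^2 - (C')^2$ holds automatically.

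The main obstacle is the sign issue in part~1: the constant-term congruence only gives $A^2 \equiv (A')^2 \pmod{p^{2k+2}}$, and upgrading this to $A \equiv A' \pmod{p^{2k+2}}$ requires ruling out $A \equiv -A' \pmod p$, which is exactly where the extra divisibility $p^{2k+1} \mid A - A'$ extracted from the $x^2$ coefficient (combined with $p \nmid A$) does its work. Part~2 is then a routine "square of a unit is a square" application of Hensel's Lemma; the only point demanding care is verifying that the $(B',C')$ produced genuinely satisfies the Case~2 valuation conditions $p^k \| B'$ and $p^{k+1} \mid C'$.
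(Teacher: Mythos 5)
Your proof is correct, and it follows essentially the same route as the paper, which omits the proof of Claim \ref{claim5} and instead gives the analogous (and slightly harder) argument for Claim \ref{claim6}: extract $p^{2k+1}\mid A-A'$ from the $x^2$ coefficient, use $p\nmid A$ to make $A+A'$ a unit and upgrade $A^2\equiv A'^2$ to $A\equiv A'\pmod{p^{2k+2}}$, then in part 2 absorb the discrepancy $A-A'=p^{2k+2}t$ into a first-order correction of $b'$. Your packaging of part 2 (noting the value of $p^2(B'x+C')^2$ is forced by the congruence, then exhibiting it via $(b')^2\equiv b^2-2pt\pmod{p^2}$) is a clean instantiation of the paper's template and fills the omitted proof correctly.
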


\begin{claim}\label{claim6}
    1) Let $f_{(A,B,C)}(x) \equiv f_{(A',B',C')}(x) \pmod{p^{2k+5}}$, $p^k \mid  B, B'$, $p^{k+1} \nmid B, B'$, and $p^{k+1} \mid C, C'$. Then $A \equiv A' \pmod{p^{2k+3}}$. 
    
    2) Fix $A, B, C,$ and $A'$ such that $A \equiv A' \pmod{p^{2k+3}}$. Then there is only one possible value of $p^2(B'x + C')^2 \pmod{p^{2k+5}}$ such that $f_{(A,B,C)}(x) \equiv f_{(A',B',C')}(x) \pmod{p^{2k+5}}$. 
\end{claim}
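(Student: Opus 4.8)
The plan is to make the comparison completely explicit. Expanding,
\[
f_{(A,B,C)}(x) = x^4 + (2pA - p^2B^2)\,x^2 - 2p^2BC\,x + p^2(A^2 - C^2),
\]
and similarly for $f_{(A',B',C')}$. Matching coefficients modulo $p^{2k+5}$ and cancelling the factor $p$ from the $x^2$-coefficient and $p^2$ from the $x$- and constant coefficients, the congruence $f_{(A,B,C)}\equiv f_{(A',B',C')}\pmod{p^{2k+5}}$ becomes the system
\begin{align*}
\text{(I)}\quad & 2A - pB^2 \equiv 2A' - pB'^2 \pmod{p^{2k+4}}, \\
\text{(II)}\quad & BC \equiv B'C' \pmod{p^{2k+3}}, \\
\text{(III)}\quad & A^2 - C^2 \equiv A'^2 - C'^2 \pmod{p^{2k+3}}.
\end{align*}
Since $k\geq 0$ and $p^{k+1}\mid C$ we have $p\mid C$, so the standing hypothesis $p\nmid A^2-C^2$ forces $p\nmid A$; and (I) gives $p\mid A-A'$, hence $A+A'\equiv 2A\pmod p$ is a unit. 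This is the engine of both halves: (III) may be rewritten as $A-A'\equiv (C^2-C'^2)(A+A')^{-1}\pmod{p^{2k+3}}$, so the exact power of $p$ dividing $A-A'$ is governed by $v_p(C^2-C'^2)$.

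For part (1) I would first clear away the easy valuations. Because $v_p(B)=v_p(B')=k$, the two sides of (II) have valuations $k+v_p(C)$ and $k+v_p(C')$; if these differ, (II) forces $\min(v_p(C),v_p(C'))\geq k+3$, and whenever $\min(v_p(C),v_p(C'))\geq k+2$ we get $v_p(C^2-C'^2)\geq 2k+4$, so (III) already yields $p^{2k+3}\mid A-A'$. Thus the one case needing work is $v_p(C)=v_p(C')=k+1$; write $B=p^kb$, $B'=p^kb'$, $C=p^{k+1}\delta$, $C'=p^{k+1}\delta'$ with $b,b',\delta,\delta'$ units mod $p$. Here (III) gives only $p^{2k+2}\mid A-A'$, one power short of the claim. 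To close the gap, feed $p^{2k+2}\mid A-A'$ back into (I): after cancelling $p^{2k+1}$ this forces $b^2\equiv b'^2\pmod p$, while (II) reduces to $b\delta\equiv b'\delta'\pmod p$. Combining, $b'\equiv\pm b\pmod p$ and then $\delta\equiv\pm\delta'\pmod p$, so $\delta^2\equiv\delta'^2\pmod p$; this raises $v_p(C^2-C'^2)$ to at least $2k+3$, and (III) then gives $p^{2k+3}\mid A-A'$.

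For part (2), fix $A,B,C$ (in Case 2) and $A'$ with $A\equiv A'\pmod{p^{2k+3}}$, say $A'-A=p^{2k+3}\lambda$. Reading (III), (II), (I) now as requirements on $C'^2$, $B'C'$, $B'^2$ shows that modulo $p^{2k+3}$ one must have $C'^2\equiv C^2$ (since $A^2-A'^2\equiv 0$), $B'C'\equiv BC$, and $B'^2\equiv B^2+2p^{2k+2}\lambda$. These three residues determine $p^2(B'x+C')^2 = p^2B'^2x^2 + 2p^2B'C'\,x + p^2C'^2$ modulo $p^{2k+5}$ uniquely, which is the uniqueness half. For existence, exhibit a legal pair: take $C'=C$ and $B'=B+p^{k+2}t$ with $t\equiv b^{-1}\lambda\pmod p$ (where $B=p^kb$); then $v_p(B')=k$, $v_p(C')=k+1$, $p\nmid A'^2-C'^2$, and, using $v_p(p^{2k+4}t^2)\geq 2k+4$, one checks directly that $B'^2\equiv B^2+2p^{2k+2}\lambda$ and $B'C'\equiv BC$ modulo $p^{2k+3}$, whence $f_{(A,B,C)}\equiv f_{(A',B',C')}\pmod{p^{2k+5}}$.

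The delicate step, exactly as for Claim \ref{claim5}, is the sub-case $v_p(C)=v_p(C')=k+1$ of part (1): the governing relation (III) falls one power of $p$ short, and recovering it requires dividing (I) and (II) down by the precise power of $p$ dividing their terms and combining them to extract $\delta^2\equiv\delta'^2\pmod p$ — a computation where a single off-by-one slip in a valuation would break the argument. Everything else is routine bookkeeping parallel to Claim \ref{claim5}.
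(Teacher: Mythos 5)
Your proof is correct and follows essentially the same coefficient-matching route as the paper's: the same three congruences obtained by cancelling $p$ and $p^2$ from the coefficients, the same use of $p\nmid A+A'$ to pass from $p^j\mid A^2-A'^2$ to $p^j\mid A-A'$, and the same bootstrapping between (I), (II), (III) in the critical sub-case $v_p(C)=v_p(C')=k+1$. Your part (2) is in fact slightly cleaner than the paper's, since reading the three coefficients of $p^2(B'x+C')^2$ directly off the congruence gives uniqueness immediately, and your explicit witness $B'=B+p^{k+2}t$, $C'=C$ settles existence.
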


These claims are proved in Appendix \ref{appendixclaim6}. Notice that when $f_{(A,B,C)}(x) \equiv f_{(A',B',C')}(x) \pmod{p^{2k+4}}$ or $\pmod{p^{2k+5}}$, this only implies that $A \equiv A' \pmod{p^{2k+2}}$ or $\pmod{p^{2k+3}}$, respectively. So, $(x^2 + pA)^2 \equiv (x^2 + pA')^2 \pmod{p^{2k+3}}$ or $\pmod{p^{2k+4}}$, respectively. This gives $p$ possible values of $f_{(A',0,0)}(x) = (x^2 + pA')^2 \pmod{p^{2k+4}}$ or $\pmod{p^{2k+5}}$ that can give $f_{(A,B,C)}(x) \equiv f_{(A',B',C')}(x)\pmod{p^{2k+4}}$ or $\pmod{p^{2k+5}}$.

We now use the two previous claims to count Case 2 reducible polynomials. The only requirement on $A$ to satisfy Equation (\ref{eqfABC}) is that $p \nmid A^2 - C^2$. Since $p \mid C$ in Case 2 (see (\ref{Case2def})), this gives us $p-1$ choices for $A \pmod p$. Using Claim \ref{claim5}, to count Case 2 reducible polynomials modulo $p^{2k+4}$ it suffices to count values of $A$ $\pmod{p^{2k+2}}$ which satisfy $p \nmid A$, and values of $p^2(Bx + C)^2 \pmod{p^{2k+4}}$, and multiply these numbers. Similarly, using Claim \ref{claim6} to count Case 2 reducible polynomials modulo $p^{2k+5}$, it suffices to count values of $A$ $\pmod{p^{2k+3}}$ which satisfy $p \nmid A$, and values of $p^2(Bx + C)^2 \pmod{p^{2k+5}}$, and multiply these numbers.

There are $p^{2k+1}(p-1)$ choices for $A$ $\pmod{p^{2k+2}}$ and there are $p^{2k+2}(p-1)$ choices for $A$ $\pmod{p^{2k+3}}$, since we require $p \nmid A$.

We now calculate the number of choices for $p^2(Bx+C)^2 \pmod{p^{2k+4}}$ and $p^2(Bx+C)^2 \pmod{p^{2k+5}}$ that follow the conditions of (\ref{Case2def}). By setting $B = bp^k$ and $C = cp^{k+1}$ with $p \nmid b$, we are equivalently calculating the number of choices for $(bx+pc)^2 \pmod{p^2}$ and $(bx+pc)^2 \pmod{p^3}$. For $\pmod{p^2}$, we have $\frac{p^2-p}{2}$ choices for $b^2 \pmod{p^2}$ since the group of $p^2-p$ units of $\ZZ/p^2\ZZ$ is cyclic. For each of these, we have $p$ choices for $2pbc \pmod{p^2}$. So, the total number of choices for $(bx+pc)^2 \pmod{p^2}$ is $p^2(p-1)/2$. 

Now we consider $(bx+c)^2 \pmod{p^3}$. We have $\frac{p^3-p^2}{2}$ choices for $b^2 \pmod{p^3}$, since the group of $p^3-p^2$ units of $\ZZ/p^3\ZZ$ is cyclic. For each of these, we have $p^2$ choices $2pbc \pmod{p^3}$. This choice of $2pbc$ will determine $p^2c^2 \pmod{p^3}$. So, the total number of choices for $(bx+pc)^2 \pmod{p^3}$ is $p^4 (p-1)/2$.

Using Claim \ref{claim5}, the choices of $A$ modulo $p^{2k+2}$ and $p^2(Bx+C)^2$  modulo $p^{2k+4}$ are independent. Using Claim \ref{claim6}, the choices of $A$ modulo $p^{2k+3}$ and $p^2(Bx+C)^2$  modulo $p^{2k+5}$ are also independent. This allows us to give a total count of the polynomials reducible by Case 2, as defined by $U$ in (\ref{Case2def}):

\begin{equation}\label{case2}
|\{\text{Case 2 Reducible Polynomials mod $p^i$}\}| = \begin{cases} 
p^{2k+3}(p-1)^2/2 & \text{ if } i = 2k+4, k\geq 0 \\
p^{2k+6}(p-1)^2/2 & \text{ if } i = 2k+5, k\geq 0
\end{cases}
\end{equation}

These are all of the polynomials reducible by Case $2$. None of them are congruent to $(x^2 + pA)^2 \pmod{p^i}$, because of an intermediate result from Claim \ref{claim5} and Claim \ref{claim6} that $p\mid b^2 - b'^2$, which has no solutions if we set $b = 0$, and require that $p \nmid b'$. These polynomials are all Hensel modulo $p^{2k+5}$, and none are Hensel modulo $p^{2k+4}$, using Claim \ref{claim3}. Notice that the number of Case 2 polynomials with $i = 2k+5$ is $p^3$ times the number with $i = 2k+4$, for $k \geq 0$. This means that all possible extensions of a Case 2 polynomial mod $p^{2k+4}$ extended to mod $p^{2k+5}$ remain reducible by Case 2, and satisfy Hensel's Lemma. So all polynomials congruent to a Case 2 polynomial mod $p^{2k+4}$ are reducible.

Now we confirm that there is no overlap between Case 1 and Case 2 reducible polynomials, in other words in (\ref{Case1def}) and (\ref{Case2def}), $T \cap U = \emptyset$. This follows from some of our earlier results. Consider a Case 1 polynomial $f(x)$ and a Case 2 polynomial $g(x)$ $\pmod{p^i}$, for $i \geq 4$. So $f(x)$ is a polynomial from (\ref{case1}) and $g(x)$ is a polynomial from (\ref{case2}). If $i$ is even, then $f(x) \not \equiv (x^2 + pA)^2 \pmod{p^{i-1}}$ for any $A \in \ZZ_p$, while $g(x) \equiv (x^2 + pA)^2 \pmod{p^{i-1}}$ for some $A \in \ZZ_p$, by (\ref{congruency}). If $i$ is odd, then $f(x) \equiv (x^2 + pA)^2 \pmod{p^{i-1}}$ for some $A \in \ZZ_p$, and $g(x) \not \equiv (x^2 + pA)^2 \pmod{p^{i-1}}$ for any $A \in \ZZ_p$. So $f(x) \neq g(x)$.

At last, we can combine the number of polynomials from Case 1 and 2 into a geometric series, using (\ref{case1}) and (\ref{case2}). The total measure of polynomials with Newton polygon having only slope $-1/2$ is $p^{-6}(p-1)$, since we require that $p$ divides their $x^2$ coefficient, $p^{2}$ divides their $x$ coefficient, and $p^2$ but not $p^{3}$ divides their constant coefficient. When we reduce polynomials having this Newton polygon, there are $p-1$ possible polynomials in $\ZZ/(p^2)[x] \pmod{p^2}$, namely $(x^2+pA)^2$ for $A = 1, 2, \ldots p-1$.

Consider a polynomial $h(x)$ which differs from $f_{(A,0,0)}(x) = (x^2+pA)^2 \pmod{p^i}$, with $h(x)$ reducible by Case 1 or Case 2. From the work we have done so far, any polynomial congruent to $h(x) \pmod{p^i}$ is reducible. Modulo $p^{i}$, there are $(p-1)p^{i-2}$ polynomials of the form $f_{(A,0,0)}(x)$, since $p \nmid A$. These are the only polynomials which lift to a mix of irreducible and reducible polynomials. 

Now, we lift the polynomials $f_{(A,0,0)}(x)$ from modulo $p^{i-1}$ to modulo $p^{i}$, similarly to Lemma \ref{lemmaDoubleQUadratics}. We count reducible polynomials which are congruent to a polynomial of the form $f_{(A,0,0)}(x) \pmod{p^{i-1}}$, but are not congruent to any $f_{(A',0,0)}(x) \pmod{p^{i}}$. By our previous work, these polynomials are either Case 1 or Case 2, as defined by $T$ and $U$ in (\ref{Case1def}) and (\ref{Case2def}), with $i = 2k+3$ for Case 1 and $i = 2k+4$ for Case 2. By our previous work, any polynomial congruent to them $\pmod{p^i}$ is reducible. Using (\ref{case1}) and (\ref{case2}), modulo $p^{i}$, there will be $p^{i-1}(p-1)^2/2$  if $i>3$, and $p^{i-1}(p-2)(p-1)/2$ if $i = 3$. With $i$ odd, all will be Case 1, and with $i$ even, all will be Case 2.

We now create a geometric series for $I$. Let $i \geq 3$. Modulo $p^i$, we consider all ambiguous polynomials, which are all of those congruent to some polynomial of the form $(x^2+pA)^2 \pmod{p^{i-1}}$. This is an open set in $\ZZ_p^3$, monic quartics with no trace, of size $(p-1)p^{i-3} p^{-3(i-1)} = (p-1)p^{-2i}$. All of these are irreducible, except a set of measure $(p-1)p^{i-2} p^{-3i} = (p-1)p^{-2i-2}$ which are of the form $(x^2+pA)^2 \pmod{p^{i}}$ and a set of measure $\mu = p^{-3i} p^{i-1}(p-1)^2/2 = p^{-2i-1}(p-1)^2/2$  if $i>3$, and $p^{-3i} p^{i-1}(p-2)(p-1)/2 = \mu - p^{-7}(p-1)/2$ if $i = 3$, which are Case 1 or 2. This gives a geometric series for the measure $M$ of all irreducible polynomials with Newton polygon with one slope $-1/2$.

\begin{align*}
M &= p^{-7}(p-1)/2 + \sum_{i = 3}^\infty \left( (p-1)p^{-2i} - (p-1)p^{-2i-2} - \mu \right) \\ 
  &=  p^{-7}(p-1)/2 + (p-1)\left(1-p^{-2}-p^{-1}(p-1)/2\right) \sum_{i = 3}^\infty p^{-2i} \\
  &= (p-1)p^{-6}\left(\frac{1}{2p} + \left(1-p^{-2}-p^{-1}(p-1)/2\right)\frac{1}{1-p^{-2}} \right)
\end{align*}

Since the total measure of polynomials with Newton polygon having only slope $-1/2$ is $p^{-6}(p-1)$, then $M = p^{-6}(p-1)I$. Therefore, 

\begin{align*}
  I = \frac{M}{(p-1)p^{-6}} &= \frac{1}{2p} +\left(1-p^{-2}-p^{-1}(p-1)/2\right)\frac{1}{1-p^{-2}} \\
  &= \frac{1}{2p} + \frac{p+2}{2(p+1)}. 
\end{align*}
\end{proof}

Now, we can prove Theorem \ref{theoremquartics} at last, using similar methods to Section \ref{sectionprimes}.

\begin{proof}

We start with the monic polynomials $f(x)$ mod $p$ with no $x^3$ term. Using Lemma \ref{lemmaSrThesis}, we get that $\frac{1}{4p} (p^{4}-p^2)$ of these are irreducible. Each of these creates an irreducible ball of size $p^{-3}$. This gives a total measure $M_1$ within $\ZZ_p^{3}$ of irreducible polynomials: 

\[M_1 = \frac{p^4-p^2}{4p^4}.\]

All of the reducible polynomials satisfy Hensel's Lemma by having relatively prime polynomial factors modulo $p$, unless they only have one monic irreducible factor $\pi(x)$ mod $p$. In this case, $f(x) \equiv \pi(x)^e$, with $e>1$ and $\deg \pi(x) = d$. Since $de = 4$, either $d = 1$ or $d = 2$. We use Corollary \ref{lemmatrace} to only consider polynomials with $x^3$ term of 0. 

First let $d = 2$, with $\pi(x) = x^2 + c_1x + c_0$. Then the $x^3$ term of $\pi(x)^2$ modulo $p$ is $2c_1 \equiv 0$. Since $p \neq 2$, $c_1 = 0$. So $\pi(x) = x^2 + \zeta$, where this is irreducible. This is true if and only if $-\zeta$ is not a square, which happens in $\frac{p-1}{2}$ cases modulo $p$. This gives us the result from Lemma \ref{lemmaDoubleQUadratics} on $\frac{p-1}{2}$ quartic polynomials, which each have measure $p^{-3}$. This adds a measure $M_2$ of irreducible polynomials within $\ZZ_p^{3}$:

\[M_2 = \frac{(p-1)(2p^2 + 1)}{4p^3(p^2+1)}.\]

Now let $d = 1$, with $f(x) \equiv \pi(x)^4 \pmod{p}$, for $\pi(x) = x+a$ for $a \in \FF_p$. The $x^3$ coefficient of $f(x)$ is $4a \equiv 0 \pmod{p}$. Since $p \neq 2$, $a = 0$. So, we need only to consider polynomials congruent to $x^4 \pmod{p}$ with no $x^3$ term.

Now we look at the Newton polygons of these polynomials congruent to $x^4 \pmod{p}$ with no $x^3$ term, as we did in Section \ref{sectionprimes}. Each Newton polygon will start at $(0,e)$, for $e>0$ being the valuation of the constant term, and end at at $(4,0)$. By Lemma \ref{lemmaS}, we only need to consider all polynomials with Newton polygon not fully above the line from $(0,4)$ to $(4,0)$. Any irreducible polynomial with Newton polygon below this line at some point must have its whole Newton polygon below this line, since the Newton polygon can only have one slope, and it ends at $(4,0)$. Therefore, we consider irreducible polynomials with $0<e<4$. 

The Newton polygons of these polynomials have only one slope, start at $(0,e)$, and end at at $(4,0)$. In each case, the measure of all polynomials with this Newton polygon is $(1-\frac{1}{p})p^{-l}$, where $l$ is the number of points $(i,j)$ with $0 \leq i \leq 2$ and $0 \leq j < e - \frac{e}{4}i$. This is because each of these points $(i,j)$ below the Newton polygon reduces the measure of the allowed polynomials by a factor of $p$, and we require that $p^{e+1}$ does not divide the constant term, which gives the factor of $(1- \frac{1}{p})$.

If $e = 2$, we get the case from Lemma \ref{lemmaSlope0.5}. The measure of all of these polynomials is $(p-1)p^{-6}$. By the result of Lemma \ref{lemmaSlope0.5}, these add a measure $M_3$ of irreducible polynomials within $\ZZ_p^{3}$: 

\[M_3 = \frac{(p-1)(p^2 + 3p + 1)}{2p^6(p^2+p)}.\]

If $e = 1$ or $e = 3$, all polynomials with these Newton polygons are irreducible. These add a measure $M_4$ of irreducible polynomials within $\ZZ_p^{3}$: 

\[M_4 = \frac{p-1}{p^4} + \frac{p-1}{p^9}\]

We add up $M_1 + M_2 + M_3 + M_4$ to get the measure of all irreducible polynomials with Newton polygon below the line from $(0,4)$ to $(4,0)$. To get a formula for $I$, we must divide this by the total measure of all monic polynomials which have Newton polygon below the line from $(0,4)$ to $(4,0)$ at some point, according to Lemma \ref{lemmaS}. Considering only polynomials with no $x^{3}$ term, and using the method of counting points below this line to calculate measure, this is $1 - p^{-9}$.

We can now simplify our formula for $I$:

\begin{align*}
I &= \frac{M_1 + M_2 +M_3 + M_4}{1 - p^{-9}} \\
&= \frac{\frac{p^4-p^2}{4p^4} + \frac{(p-1)(2p^2 + 1)}{4p^3(p^2+1)} +  \frac{(p-1)(p^2 + 3p + 1)}{2p^6(p^2+p)} + \frac{p-1}{p^4} + \frac{p-1}{p^9}}{1-p^{-9}} \\
&= \frac{1}{4(p^9-1)(p^2+1)(p+1)}\Bigg( p^5(p^4-p^2)(p^2+1)(p+1) + p^6(p+1)(p-1)(2p^2 + 1) \\
&+ 2p^2(p-1)(p^2 + 3p + 1)(p^2+1) + 4p^5(p^2+1)(p+1)(p-1) + 4(p^2+1)(p+1)(p-1)\Bigg) \\
&= \frac{p^{12}+p^{11}+2p^{10}+4p^9 - 2p^8 + p^7 + 3p^6 - 6p^5 + 6p^4 - 4p^3 - 2p^2 - 4}{4(p+1)(p^2+1)(p^9-1)}
\end{align*}

\end{proof}

We wonder whether this result holds for $p = 2$. It has been proven that the proportion of all (not necessarily monic) polynomials in $\ZZ_p[x]$ of any degree which are irreducible is a rational function in $p$. (\cite{asvin}) This suggests that this theorem could be true for $p=2$, where we only consider monic polynomials.

\appendix 

\section{Appendix: Proofs of Claims from Section \ref{sectionquartics}}\label{appendix}

\subsection{Proof of Claim \ref{claim1}}\label{appendixclaim1}

\begin{proof}

Let us introduce a root $y$ of $x^2 + p\alpha x + p \beta + \zeta$, so $x^2 + p\alpha x + p \beta + \zeta = (x-y)(x-\overline{y})$. We have $v_p(y) = v_p(\overline{y}) = 0$, so $\ZZ_p[y]$ is an unramified extension of degree 2, $\ZZ_{p^2}$. Then $f(x) = (x-y)(x-\overline{y})(x^2 -p \alpha x + p\gamma + \zeta)$. All polynomials $g(x)$ congruent to $f(x) \pmod{p^{2k+3}}$ will satisfy $p^{2k+3}\mid g(y)$. So $|g(y)| \leq p^{-(2k+3)}$. So it will be sufficient to show that $|g'(y)| > p^{-(k+1.5)}$, or equivalently that $v_p(g'(y)) < k + 1.5$. We want to calculate $v_p(f'(y)) \pmod{p^{2k+3}}$. 

Using the product rule, $f'(y) = (y-\overline{y})(y^2 -p \alpha y + p\gamma + \zeta) = (y-\overline{y})(-2p\alpha y +p(\gamma - \beta))$. Since $y + \overline{y} = -p \alpha$, $y-\overline{y} = 2y + p \alpha$. Since $v_p(y) = 0$, we get $v_p(y - \overline{y}) = 0$. We can write $-2\alpha = p^k A$ and $\gamma - \beta = p^k B$, where $A, B \in \ZZ_p$ and either $p \nmid A$ or $p \nmid B$. Then $v_p(f'(y)) = k+1 + v_p(Ay + B)$. Thus it will be sufficient to show that $v_p(Ay + B) = 0$. 

Assume the opposite, which is that $v_p(Ay + B) \geq 1$. Take $A, B, y$ modulo $p$, and we assume that $Ay + B \equiv 0$ modulo $p$. If $A \equiv 0$ modulo $p$, then clearly $B \equiv 0$ too, which is a contradiction since either $p \nmid A$ or $p \nmid B$. Assuming $A \not \equiv 0$, we can take $y \equiv -B/A$ modulo $p$. However, we also have that $y^2 + \zeta \equiv 0$ modulo $p$. This is a contradiction, because $x^2 + \zeta$ is irreducible in $\FF_p[x]$. Therefore $v_p(Ay + B) = 0$, and it follows that $v_p(f'(y)) = k+1$. Therefore $f$ is Hensel with the root $y$, modulo $p^{2k+3}$.

\end{proof}

\subsection{Proof of Claim \ref{claim2}}\label{appendixclaim2}
\begin{proof}
Setting $B = bp^k$, $C = cp^k$, $B' = b'p^k$, and $C' = c'p^k$ and equating coefficients, we get three equations $\pmod{p^{2k+3}}$. We start with the $x^2$ coefficient:

\begin{align}
    2pA - p^2B^2 + 2\zeta &\equiv 2pA' - p^2B'^2 + 2\zeta &\pmod{p^{2k+3}} \nonumber \\
    2p (A-A') &\equiv p^{2k+2} (b^2-b'^2) &\pmod{p^{2k+3}} \label{equation1}
\end{align}

From (\ref{equation1}), we get that $p^{2k+1} \mid  A-A'$. Therefore $p^{2k+1} \mid  A^2 - A'^2$. We now look at the constant coefficient:

\begin{align}
p^2 A^2 + 2p\zeta A + \zeta^2 - p^2 C^2 &\equiv p^2 A'^2 + 2p\zeta A' + \zeta^2 - p^2 C'^2 &\pmod{p^{2k+3}} \nonumber \\
2p \zeta (A-A') &\equiv p^{2k+2} (c^2-c'^2) &\pmod{p^{2k+3}} \label{equation2}
\end{align}

Combining (\ref{equation1}) and (\ref{equation2}), we have that 

\begin{align}
\zeta p^{2k+2} (b^2-b'^2) &\equiv p^{2k+2} (c^2-c'^2) &\pmod{p^{2k+3}} \nonumber\\
\zeta(b^2-b'^2) &\equiv (c^2-c'^2) &\pmod{p} \label{equation3}
\end{align}

Now, looking at the $x$ coefficient:

\begin{align}
-2p^2BC &\equiv -2p^2B'C' &\pmod{p^{2k+3}} \nonumber \\
-2p^{2k+2} bc &\equiv -2p^{2k+2} b'c' &\pmod{p^{2k+3}}  \nonumber \\
bc &\equiv b'c' &\pmod{p} \label{equation4}
\end{align}

So we have two equations for $b,c$ in $\FF_p:$ (\ref{equation3}) and (\ref{equation4}). Define $\chi^2 = -\zeta$. Since $x^2 + \zeta$ is irreducible in $\FF_p[x]$, then $\chi$ is not in $\FF_p$. So, we move to the degree 2 extension $\FF_p[\chi]$, which is a field. Combining our two equations, $c^2+2\chi bc - \zeta b^2 = c'^2 + 2\chi b'c' - \zeta b'^2$. Or, $(c+\chi b)^2 = (c' + \chi b')^2$. Since $\FF_p[\chi]$ is a field, $c+\chi b = \pm (c' + \chi b')$. Since $\chi$ is not in $\FF_p$, $c = c'$ and $b=b'$, or $c = -c'$ and $b = -b'$. In either case, we can use that $2p (A-A') \equiv p^{2k+2} (b^2-b'^2) \pmod{p^{2k+3}}$ to get that $A \equiv A' \pmod{p^{2k+2}}$, as we desired.
\end{proof}

\subsection{Proof of Claim \ref{claim3}}\label{appendixclaim3}
\begin{proof}

Let us introduce a root $y$ of $x^2 + p\alpha x + p \beta$, so $x^2 + p\alpha x + p \beta = (x-y)(x-\overline{y})$. Using norms of $p$-adic extensions, $v_p(y) = v_p(\overline{y}) = v_p(p \beta)/2 = 1/2$. So $\ZZ_p[y]$ is a ramified extension of degree 2. Then $f(x) = (x-y)(x-\overline{y})(x^2 -p \alpha x + p\gamma)$. We now calculate $f'(y)$. 

Using the product rule, $f'(y) = (y-\overline{y})(y^2 -p \alpha y + p\gamma) = (y-\overline{y})(-2p\alpha y +p(\gamma - \beta))$.  $y + \overline{y} = -p \alpha$, so $y-\overline{y} = 2y + p \alpha$. Since $v_p(y) = 0.5$, and $p \neq 2$ we get $v_p(y - \overline{y}) = 0.5$. Then $v_p(f'(y)) = 1.5 + v_p(-2\alpha y + (\gamma - \beta))$. In Case 1, $v_p(\gamma - \beta) = k$, and $v_p(-2\alpha y) \geq k+0.5$, so $v_p(f'(y)) = k + 1.5$ and $|f'(y)| = p^{-k-1.5}$. In Case 2, $v_p(-2\alpha y) = k+0.5$, and $v_p(\gamma - \beta) \geq k+1$, so $v_p(f'(y)) = k + 2$ and $|f'(y)| = p^{-k-2}$.

All polynomials $g(x)$ congruent to $f(x) \pmod{p^{2k+4}}$ will satisfy $p^{2k+4}\mid g(y)$. So $|g(y)| \leq p^{-(2k+4)}$. In Case 1, all of these polynomials satisfy the inequality of Hensel's Lemma: $|g(y)| < |g'(y)|^2$. In Case 2, these do not satisfy the inequality. Similarly, all polynomials $g(x)$ congruent to $f(x) \pmod{p^{2k+5}}$ will satisfy $p^{2k+5}\mid g(y)$. So $|g(y)| \leq p^{-(2k+5)}$. In Case 2, all of these polynomials satisfy Hensel's Lemma.
\end{proof}

\subsection{Proof of Claim \ref{claim4}}\label{appendixclaim4}
\begin{proof}
We start by getting equations from equating the terms of $f_{(A,B,C)}(x)$ and $f_{(A',B',C')}(x)$ mod $p^{2k+4}$:

\begin{align*}
    2pA - p^2 B^2 &\equiv 2pA' - p^2 B'^2 &\pmod{p^{2k+4}} \\
    p^2BC &\equiv p^2B'C' &\pmod{p^{2k+4}} \\
    p^2 A^2 - p^2 C^2 &\equiv p^2 A'^2 - p^2 C'^2 &\pmod{p^{2k+4}}
\end{align*}

We set $B = bp^k$, $C = cp^k$, $B' = b'p^k$, and $C' = c'p^k$, with $p \nmid c, c'$. These equations now become:

\begin{align}
    2 (A - A') &\equiv p^{2k+1} (b^2-b'^2) &\pmod{p^{2k+3}} \label{eq1} \\
    bc &\equiv b'c' &\pmod{p^{2}} \label{eq2} \\
    A^2 - A'^2 &\equiv p^{2k}(c^2 - c'^2) &\pmod{p^{2k+2}} \label{eq3}
\end{align}

From (\ref{eq1}), $p^{2k+1} \mid  A - A'$. By using a difference of squares in (\ref{eq3}), $p^{2k+1} \mid  A^2 - A'^2$, so $p \mid  c^2 - c'^2$. So $c \equiv \pm c' \pmod{p}$. Using (\ref{eq2}), $b \equiv \pm b' \pmod{p}$, with $b \equiv b'$ if $c \equiv c' \pmod{p}$ and $b \equiv -b'$ if $c \equiv -c' \pmod{p}$. Using (\ref{eq1}), $p^{2k+2} \mid  A - A'$. 

If we had only required $f_{(A,B,C)}(x) \equiv f_{(A',B',C')}(x)$ mod $p^{2k+3}$, all of the logic thus far would hold, simply by decreasing the power of $p$ by 1 in every modular constraint. This concludes the proof of the second part of the claim. We now continue with the proof of the first part of the claim.

By using a difference of squares in (\ref{eq3}), $p^{2k+2} \mid  A^2 - A'^2$, so $p^2 \mid  c^2 - c'^2$ and $p^2 \mid  (c-c')(c+c')$. Since $p \nmid c, c'$, it is not possible for $p \mid  c-c'$ and $p\mid c+c'$ to both hold. Therefore either $p^2\mid c-c'$ or $p^2\mid c + c'$. So $c \equiv \pm c' \pmod{p^2}$. Using (\ref{eq2}), since $c$ and $c'$ are not divisible by $p$, we can multiply both sides by $c^{-1} \pmod{p^2}$. This gives that $b \equiv \pm b' \pmod{p^2}$, with $b \equiv b'$ if $c \equiv c' \pmod{p^2}$ and $b \equiv -b'$ if $c \equiv -c' \pmod{p^2}$. Using (\ref{eq1}), $p^{2k+3} \mid  A - A'$.
\end{proof}

\subsection{Proof of Claim \ref{claim6}}\label{appendixclaim6}
The proof of Claim \ref{claim5} is a simpler version of the proof of Claim \ref{claim6}, so we choose to omit it. 
\begin{proof}
We start by getting equations from equating the terms of $f_{(A,B,C)}(x)$ and $f_{(A',B',C')}(x)$ mod $p^{2k+5}$, and simplifying them in a similar manner to Claim \ref{claim4} by setting $B = bp^k$, $C = cp^{k+1}$, $B' = b'p^k$, and $C' = c'p^{k+1}$, with $p \nmid b, b'$. These equations now become:

\begin{align}
    2 (A - A') &\equiv p^{2k+1} (b^2-b'^2) &\pmod{p^{2k+4}} \label{eq4} \\
    bc &\equiv b'c' &\pmod{p^{2}} \label{eq5} \\
    A^2 - A'^2 &\equiv p^{2k+2}(c^2 - c'^2) &\pmod{p^{2k+3}} \label{eq6}
\end{align}

Recall from (\ref{eqfABC}) that $p \nmid A^2 - C^2$. Since $p \mid C$, $p \nmid A$. Similarly, $p \nmid A'$. From (\ref{eq4}), $p \mid  A - A'$. Therefore, $p \nmid A + A'$.

From (\ref{eq6}), $p^{2k+2} \mid A^2 - A'^2$. Since $p \nmid A + A'$, this implies that $p^{2k+2} \mid A - A'$. From (\ref{eq4}), $p \mid b^2 - b'^2$, so $b \equiv \pm b' \pmod{p}$. From (\ref{eq5}), $c \equiv \pm c' \pmod{p}$, with the sign matching that of $b$ and $b'$. From (\ref{eq6}), $p^{2k+3}\mid A^2 - A'^2$. Since $p \nmid A + A'$, this implies that $p^{2k+3} \mid  A - A'$.

From (\ref{eq4}), $p^2 \mid  b^2 - b'^2$. Since $p \nmid b$, $p \nmid b'$, and $p\mid b-b'$ or $p\mid b+b'$, we either have $p^2 \mid  b - b'$ or $p^2 \mid  b + b'$. So $b \equiv \pm b' \pmod{p^2}$. From (\ref{eq5}), we can deduce that $c \equiv \pm c' \pmod{p^2}$, with the sign matching that of $b$ and $b'$.

Now fix $A, B, C,$ and $A'$ such that $A \equiv A' \pmod{p^{2k+3}}$. Let $A - A' = p^{2k+3} \varepsilon$ for $\varepsilon \in \ZZ_p$. We want to find a unique value of $p^2(B'x + C')^2 \pmod{p^{2k+5}}$, or equivalently a unique value of $(b'x + pc')^2 \pmod{p^3}$, that satisfies (\ref{eq4}), (\ref{eq5}), and (\ref{eq6}). We thus care only about $c'^2 \pmod p$, $b'c' \pmod{p^2}$, and $b'^2 \pmod{p^3}$. Without loss of generality, assume that $b \equiv b' \pmod{p^2}$ and $c \equiv c' \pmod{p^2}$ (the other case is where $b \equiv -b' \pmod{p^2}$ and $c \equiv -c' \pmod{p^2}$ and is similar). This fully determines $c'^2 \pmod p$ and $b'c' \pmod{p^2}$, and also means that (\ref{eq5}) and (\ref{eq6}) are automatically satisfied. 

We now focus on possible values of $b'^2 \pmod{p^3}$ which solve (\ref{eq4}). (\ref{eq4}) says that \[2 p^2 \varepsilon \equiv b^2 - b'^2 \pmod{p^3}\] Let $b' = b + p^2 \delta$, for $\delta \in \ZZ_p$. Then $b^2 - b'^2 = p^2 \delta (2b + p^2 \delta) \equiv 2b p^2 \delta \pmod{p^3}$. Dividing out by $2p^2$, we have that $\varepsilon \equiv b \delta \pmod{p}$. Since $p \nmid b$, there is a unique value of $\delta \pmod{p}$ that satisfies this. So there is a unique value of $b' = b + p^2 \delta \pmod{p^3}$ that satisfies (\ref{eq4}). So there is one value of $b'^2 \pmod{p^3}$ which solves (\ref{eq4}).
\end{proof}

\printbibliography[
heading=bibintoc,
title={Bibliography}
]

\end{document}